\def\bbordermatrix#1{\begingroup \m@th
  \@tempdima 4.75\p@
  \setbox\z@\vbox{%
    \def\cr{\crcr\noalign{\kern2\p@\global\let\cr\endline}}%
    \ialign{$##$\hfil\kern2\p@\kern\@tempdima&\thinspace\hfil$##$\hfil
      &&\quad\hfil$##$\hfil\crcr
      \omit\strut\hfil\crcr\noalign{\kern-\baselineskip}%
      #1\crcr\omit\strut\cr}}%
  \setbox\tw@\vbox{\unvcopy\z@\global\setbox\@ne\lastbox}%
  \setbox\tw@\hbox{\unhbox\@ne\unskip\global\setbox\@ne\lastbox}%
  \setbox\tw@\hbox{$\kern\wd\@ne\kern-\@tempdima\left[\kern-\wd\@ne
    \global\setbox\@ne\vbox{\box\@ne\kern2\p@}%
    \vcenter{\kern-\ht\@ne\unvbox\z@\kern-\baselineskip}\,\right]$}%
  \null\;\vbox{\kern\ht\@ne\box\tw@}\endgroup}
\def\VR{\kern-\arraycolsep\strut\vrule &\kern-\arraycolsep}
\def\vr{\kern-\arraycolsep & \kern-\arraycolsep}
\theoremstyle{plain}
\newtheorem{theorem}[subsection]{Theorem}
\newtheorem{proposition}[subsection]{Proposition}
\newtheorem{lemma}[subsection]{Lemma}
\newtheorem{corollary}[subsection]{Corollary}
\theoremstyle{definition}
\newtheorem{definition}[subsection]{Definition}
\newtheorem{claim}[subsection]{Claim}
\newtheorem{formula}[subsection]{Formula}
\newtheorem{example}[subsubsection]{Example}
\newtheorem{remark}[subsection]{Remark}
\newcommand{\fpt}{\operatorname{fpt}}
\title{Lower Bounds on the F-pure Threshold\\ and Extremal Singularities\footnote{Math Subject Classifications: Primary 13A35. Secondary 14B05.}
}
\author{Zhibek Kadyrsizova, Jennifer Kenkel, Janet Page, Jyoti Singh, \\ Karen E. Smith, Adela Vraciu, and Emily E. Witt  }
\email{ kesmith@umich.edu}
\thanks{This project  began at the AWM-sponsored workshop "Women in Commutative Algebra" at the Banff International Research Station. Partial funding for participants was supplied by 
  NSF grant numbers  193439 and  NSF-HRD 150048. In addition, partial funding was provided  by SERB(DST) grant number ECR/2017/000963 (for Jyoti Singh), NSF grant number 1801697, 1952399, and 2101075 (for Karen Smith), and NSF CAREER grant 1945611 (for Emily Witt).}
\begin{document}

\maketitle


\begin{abstract}
We prove that  if $f$ is a reduced  homogenous polynomial of degree $d$, then its $F$-pure threshold at the unique homogeneous maximal ideal  is at least $\frac{1}{d-1}$. We show, furthermore,  that its $F$-pure threshold equals $\frac{1}{d-1}$ if and only if $f\in \mathfrak m^{[q]}$ and $d=q+1$, where $q$ is a power of $p$.  Up to linear changes of coordinates (over a fixed algebraically closed field), we  classify  such "extremal singularities,"  and show that there is at most one with isolated singularity. Finally, we indicate several  ways in which  the projective hypersurfaces defined by such forms are 
 "extremal," for example, in terms of the configurations of lines they can contain.

\end{abstract}

\section{Introduction}

\noindent
 
Fix an algebraically closed  field $k$ of positive characteristic $p$.  What is the most singular possible hypersurface singularity  over $k$? 

The {\it  multiplicity} is the first  crude  measurement  of singularity---roughly speaking higher multiplicity singularities are more singular. But we want to identify which singularities are the most singular, even taking  multiplicity into account.  Among multiplicity two singularities, for example, the cusp $y^2-x^3$  is more singular than the normal crossing  $xy$,  but also  the cusp is more singular  in some characteristics than others. For example, the derivative with respect to $x$ vanishes to all orders when $k$ has characteristic three, but only to order two in any other characteristic.

\medskip
The {\it  $F$-pure threshold}  is a more refined  numerical invariant for comparing singularities of hypersurfaces  in positive characteristic. Analogous to---but much more subtle than---the log canonical threshold for a complex hypersurface,   the  $F$-pure threshold  is equal to one at each smooth  point (or more generally,  at each {\it $F$-pure} point),  with  "worse singularities" having {\it smaller} $F$-pure thresholds.   Using the $F$-pure threshold,  can we  identify a class of singularities that is "maximally singular"? What are the properties of such "extremal singularities"?

This  paper  proves   an elegant  lower bound on the $F$-pure threshold
 of a reduced homogeneous form in terms of its degree---that is, in terms of the multiplicity at the origin. We show that our  lower bound   is sharp, and  characterize  precisely what  forms achieve it.  One can argue that 
such hypersurfaces---those with minimal $F$-pure threshold---are  "maximally singular", and indeed, we prove
they have  several interesting  extremal algebraic and geometric properties. We also prove a complete classification of these {\bf extremal singularities}  up to  linear change of coordinates.

The main theorem of the first half of the paper is the following sharp bound on the F-pure threshold and 
characterization of the forms achieving it:

 \begin{theorem}\label{main1}
 Fix any field $k$ of  positive characteristic $p$.
Let $f\in k[x_1, \dots, x_n]$ be a  homogeneous polynomial of degree  $d=\mathrm{deg}(f)\geq 2$, reduced over the algebraic closure of $k$.
Then 
\begin{equation}\label{Lowerbound2}
\fpt(f) \ge \frac{1}{d-1}.
\end{equation}
Furthermore, equality  holds in  (\ref{Lowerbound2})  if and only if  $d=q+1$,  where $q$ is a power of $p$ and $f\in \langle x_1^{q}, \dots, x_n^{q}\rangle.$
\end{theorem}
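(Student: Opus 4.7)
The plan is to split the theorem into the lower-bound inequality and the equality characterization, since the two parts require different techniques. Recall $\fpt(f) = \lim_e \nu_e(f)/p^e$ with $\nu_e(f) := \max\{N : f^N \notin \mathfrak{m}^{[p^e]}\}$, and write $q = p^e$. For the lower bound, it suffices to show $\nu_e(f) \ge \lfloor (q-1)/(d-1)\rfloor$ for every $e$. A naive degree count---if $f^N \in \mathfrak{m}^{[q]}$ then $Nd \ge q$, since $\mathfrak{m}^{[q]}$ is generated in degree $q$---yields only the weaker bound $\fpt(f) \ge 1/d$, so the improvement to $1/(d-1)$ must crucially use the reducedness hypothesis. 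My approach would be, for $N = \lfloor (q-1)/(d-1) \rfloor$, to exhibit a homogeneous form $h$ of complementary degree $n(q-1) - Nd$ such that the product $f^N h$ has nonzero coefficient on the socle generator $(x_1 \cdots x_n)^{q-1}$ of $R/\mathfrak{m}^{[q]}$; by Macaulay duality this witnesses $f^N \notin \mathfrak{m}^{[q]}$. A natural candidate for $h$ involves partial derivatives of $f$, since reducedness guarantees the Jacobian ideal is nontrivial; alternatively, $h$ could be built from explicit monomials designed to pair with a chosen monomial appearing in the multinomial expansion of $f^N$.

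For the equality characterization, the \emph{if} direction is immediate: when $d = q+1$ with $q = p^e$ and $f \in \mathfrak{m}^{[q]}$, one has $f^N \in \mathfrak{m}^{[q]}$ for every $N \ge 1$, so $\nu_e(f) = 0$; combined with the lower bound this forces $\fpt(f) = 1/p^e = 1/(d-1)$.

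The \emph{only if} direction contains the bulk of the work. Assuming $\fpt(f) = 1/(d-1)$, the upper bound $\nu_e(f) \le p^e/(d-1)$ together with the lower bound from the first part pins down $\nu_e(f)$ tightly for all $e$. If $d-1$ were not a power of $p$, I would use the standard recursion $p\nu_e(f) \le \nu_{e+1}(f) \le p\nu_e(f) + p - 1$ together with an additive reducedness-driven term in the lower bound to deduce $\fpt(f) > 1/(d-1)$, a contradiction. Hence $d-1 = p^s$ for some $s$, so $d = q+1$ with $q = p^s$; then $\nu_s(f) \in \{0, 1\}$, and to rule out $\nu_s(f) = 1$ one argues that a witness monomial $x^\alpha$ of $f$ with all $\alpha_i < p^s$ can be combined with an appropriately small monomial $x^\gamma$ of $f$ to produce $x^{\gamma + p\alpha}$ in $f \cdot f^p$, showing $\nu_{s+1}(f) \ge p+1$ and contradicting $\fpt(f) = 1/p^s$. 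Therefore $\nu_s(f) = 0$, i.e., $f \in \mathfrak{m}^{[q]}$.

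The main obstacle is the lower bound: precisely identifying the algebraic mechanism by which the reducedness of $f$ upgrades the trivial $1/d$ bound to the sharp $1/(d-1)$ bound. The \emph{only if} direction of the equality characterization then leverages the same combinatorial mechanism quantitatively, and is delicate for the same reason.
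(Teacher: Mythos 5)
Your outline correctly identifies that reducedness is the crucial hypothesis, and the ``if'' direction of the equality characterization is fine as you state it. But the heart of the theorem is missing in two places, and you yourself acknowledge the first.

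\textbf{The lower bound is never proved.} You propose constructing, by Macaulay duality, a complementary form $h$ so that $f^N h$ hits the socle generator $(x_1\cdots x_n)^{q-1}$, and suggest that $h$ might involve partial derivatives or carefully chosen monomials---but no construction is given, and neither candidate obviously works (cross terms in $f^N h$ can cancel, and reducedness is weaker than the properties you'd need to control the socle coefficient directly). The paper instead reduces to two variables: after modding out $n-2$ general linear forms (which preserves reducedness by a Bertini-type result and can only decrease the $F$-pure threshold), write $f=xyg$ with $g$ of degree $d-2$, and observe that if $f^N=(xyg)^N\in\langle x^q,y^q\rangle$ then, since $x,y$ is a regular sequence, $g^N\in\langle x^{q-N},y^{q-N}\rangle$; the degree inequality $N(d-2)\ge q-N$ then gives $N/q\ge 1/(d-1)$ directly. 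This is the ``algebraic mechanism'' you were looking for, and it is qualitatively different from a Macaulay-duality attack: it works by peeling off a product of two coprime linear factors, not by exhibiting a dual witness.

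\textbf{The ``only if'' direction is also not established.} The step that $d-1$ must be a power of $p$ is asserted via a vague ``additive reducedness-driven term in the lower bound'' combined with the recursion $p\nu_e\le\nu_{e+1}\le p\nu_e+p-1$; that recursion alone, plus the numerical bounds $\lfloor(p^e-1)/(d-1)\rfloor\le\nu_e\le p^e/(d-1)$, does not produce a contradiction when $d-1\nmid p^e$. The paper's actual argument here is delicate: setting $K_e=\lceil(p^e+1)/(d-1)\rceil$, one writes $f^{K_e}=Ax^q+By^q$, uses a subtraction trick with a higher Frobenius power plus unique factorization (this is where reducedness enters a second time) to show $f\mid A$ and $f\mid B$, hence $f^{K_e-1}\in\langle x^q,y^q\rangle$; peeling as before and bookkeeping with the ceiling forces the remainder of $p^e+1$ modulo $d-1$ to be exactly $1$, i.e.\ $(d-1)\mid p^e$. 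Finally, your attempt to rule out $\nu_s(f)=1$ by tracking a single monomial $x^{\gamma+p\alpha}$ in $f\cdot f^p$ runs into exactly the cancellation problem you wave away in the lower bound; the paper avoids this (Lemma \ref{firstbound}) by noting that $f\notin\mathfrak m^{[p^e]}$ can be taken as part of a free basis of $S$ over $S^{p^e}$, and the resulting projection shows $f\cdot f^{p^{e'}}\notin\mathfrak m^{[p^{e+e'}]}$ for $e'\ge e$, giving the quantitative improvement $\fpt(f)\ge 1/p^e+1/p^{2e}$.
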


Theorem \ref{main1} is a combination of Theorem
\ref{main}, in which we prove the general bound in terms of degree and find the degrees that can achieve it, 
and Theorem \ref{justify}, in which we characterize the {\it forms}  achieving the bound.
The most difficult step is showing that the bound  can be sharp only for certain $d$, which uses delicate analytic estimates in Section 2.  While there has been much research into computing   the $F$-pure threshold in specific settings (see, for example,  \cite{BhattSingh}, \cite{Hernandez}, \cite{HNWZ}, \cite{hernandez-teixeira}), we are not aware of any prior  research into { lower bounds} on the $F$-pure threshold.

The forms achieving the lower bound (\ref{Lowerbound2})  are { extremal} in many ways unrelated to F-pure threshold.  In  Section 8, we discuss several extremal geometric properties of the corresponding projective hypersurfaces.  For example,  when the  form is in at least four variables and defines a smooth hypersurface,
 these are the only projective hypersufaces (of degree larger than two) with the property that all smooth hyperplane sections are isomorphic; this follows from  Corollary \ref{cor1} and a theorem of Beauville (see Remark \ref{rem8}).  They  have  inseparable Gauss maps   and are isomorphic to their Gauss  duals; see Proposition \ref{gauss} for a precise statement and Remark \ref{just} for a discussion of how such behavoir is  extremal. 
Their extremal nature  is also reflected in the very special   configurations of  lines on them: despite containing many coplanar lines, no plane section  can contain a triangle (Proposition \ref{starconfig}).  This triangle-free property characterizes smooth cubic surfaces defined by forms of minimal F-pure threshold \cite{cubicspaper}.

There is no analog for such "extremely singular" singularities in characteristic zero. For example, regardless of the characteristic, a smooth cubic surface   can contain at most eighteen Eckardt points---unless its defining equation achieves the minimal F-pure threshold  of bound (\ref{Lowerbound2}), in which case this {\it extremal} cubic surface  contains exactly {\it forty-five } Eckardt points---one in each and every  of the forty-five tri-tangent planes   \cite{cubicspaper}.  As another example, in characteristic zero (or when $p>d$),
the maximal number of lines on a smooth projective surface of degree $d$ is bounded above by a quadratic function in $d$\  \cite{Segre.43, Rams+Schutt.15-64lines, Bauer+Rams}; however, the number of lines on a smooth  projective {\it extremal} surface---one whose defining polynomial achieves the lower bound (\ref{Lowerbound2})--- is {\it quartic} in its degree  \cite[3.2.3]{Brosowsky+Page+Ryan+Smith}.  These examples  confirm    that  singularities can be {\it much worse} 
   in characteristic $p$ than in characteristic zero,  reflecting  the fact that our  sharp lower bound on F-pure threshold  is much smaller than corresponding bounds on log canonical threshold; see paragraph (\ref{EM}).

In the second half of the paper, we tackle the classification of the extremal forms achieving the lower bound (\ref{Lowerbound2}).
Theorem \ref{main1} says that they are in  high "Frobenius powers" of the unique homogeneous maximal ideal---more precisely,  we can represent them by 
$$
x_1^{q}L_1 + \dots + x_n^{q}L_n, 
$$
where the $L_i$ are  linear forms. We call such polynomials {\bf Frobenius  forms}.
Frobenius forms have a convenient matrix representation, not unlike the matrix representation for a quadratic form,  
that we can exploit to  prove:
 \begin{theorem}\label{fullrank2}
Over any  algebraically closed field  $k$ of positive characteristic $p$, there is a bijection between the projective equivalence classes of non-degenerate Frobenius forms in $n$ variables (of fixed degree $q+1$) and partitions of $n$. Only  one of these defines an isolated singularity, namely the one represented by the "diagonal" form $x_1^{q+1}+x_2^{q+1}+ \dots + x_n^{q+1}$.
\end{theorem}

In three variables, for example, we have three distinct projective equivalence classes of non-degenerate extremal singularities (over each allowable field and of each allowable degree), corresponding to the three partitions $3$, $2+1$ and $1+1+1$.  For example, Theorem  \ref{fullrank2} implies that there are precisely five extremal {\it cubic} surfaces up to change of coordinates,  not including the degenerate ones (which  are cones over extremal curves); 
these extremal cubic surfaces  turn out to be exactly the non-Frobenius split cubic surfaces, a class studied in \cite{cubicspaper}.   The case where $n= 5$ was handled in  \cite{WicaPaper}.

Theorem \ref{fullrank2} is a combination of Theorems \ref{fullrank} and \ref{partitions} proved in Sections 6 and 7, respectively. 
We believe that the classification of Frobenius forms by partitions, Theorem \ref{partitions},  is new, though it resembles a {\it different} classification problem considered by Hasse and Witt \cite{HesseWitt} (see Remark \ref{HWCompare}). Theorem \ref{fullrank},   that there is only one  (up to change of coordinates) 
with  isolated singularity,  
 follows also from a theorem of Beauville when there are at least $4$ variables \cite{beauville}; we give a straightforward  argument using Hilbert's Nullstellensatz.

\medskip
 \subsection{Comparison to Hermitian Forms in Prime  Characteristic.}\label{Herm}
Frobenius forms should not be confused with characteristic $p$ {\it Hermitian forms}, which form a very special subclass of Frobenius forms. Hermitian forms have  long  been known  to be 
  extremal with respect to the number of rational points the corresponding projective hypersurface  contains;  see, for example,  \cite{segre}, \cite{BC},  \cite{HommaKim}.  The existence of many rational points is a consequence of the many linear spaces contained in the projective hypersurface defined by a Frobenius form; see the discussion in \cite[\S35]{kollar14} for example.
 The classification of Hermitian forms is much simpler than general Frobenius forms: there is only one of each rank  \cite{BC}\cite{HesseWitt}. For the definition of Hermitian form,  see Remark \ref{Hermitian}.

\medskip
 \subsection{Comparison to Lower Bounds for Log Canonical threshold}\label{EM}
 Lower bounds on log canonical thresholds have been studied in  \cite{CheltsovPark},   \cite{deFernexEinMustata}, and  \cite{DemailyPham}. 
Our  lower bound on the $F$-pure threshold  immediately implies an analogous bound on the log canonical threshold of a complex hypersurface  by reduction to characteristic $p$; see \S \ref{charzero}.  However,  while sharp in prime characteristics, the corresponding bound for the log canonical threshold is far from sharp. This is to be expected: 
singularities in prime characteristic can be "bad" in ways not possible over $\mathbb C$. 

For example,  de Fernex,  Ein, and Musta\c{t}\u{a} \cite{deFernexEinMustata} prove that for a complex
homogeneous polynomial   of degree $d$  in $n$ variables, the log canonical threshold is bounded below by 
$$
\min\left(\frac{n-r}{d}, 1\right),
$$
where $r$ is the dimension of the singular locus of the corresponding affine hypersurface. 
The corresponding statement in positive characteristic, however, is spectacularly false. For example, 
the polynomial in characteristic $p$ defined by 
$$
x_1^{p^e+1} +  x_2^{p^e+1}  + \dots  + x_n^{p^e+1} 
$$
has $F$-pure threshold $\frac{1}{p^e}$ (a simple case of Theorem \ref{main1}), which is much smaller, for $n\gg p$, than  the value provided by the de Fernex-Ein-Musta\c{t}\u{a} bound.

 \medskip
 \noindent
{\sc Acknowledgements.} The authors thank  Karl Schwede, Mircea Musta\c{t}\u{a}, and J\'anos
 Koll\'ar for their  interest, and  especially Koll\'ar for directing us to the references \cite{kollar14} and \cite{HesseWitt}, and Manuel Blickle for help translating \cite{HesseWitt}.    The fifth author also gratefully acknowledges lively discussions with Damiano Testa, where she first learned of some of the extremal properties of certain  forms, such as the inseparability of their Gauss maps. 
\section{Background on the $F$-pure Threshold}

Fix  a  field $k$ of  positive characteristic  $p$.
Given a homogeneous form $f\in k[x_1, \dots, x_n]$, we  consider the singularity, at the origin, of the affine  hypersurface  defined by vanishing of $f$. 
Algebraically stated, we consider the singularity of $f$ at $\mathfrak m$,  where $\mathfrak m$ denotes the maximal ideal  $\langle x_1, \dots, x_n\rangle$. 

\begin{definition}\label{FPureDef} The $F$-pure threshold of $f\in k[x_1, \dots, x_n]$ (at the maximal ideal $\mathfrak{m} $) is the real number
$$
\fpt(f) = \sup \left\{\frac{N}{p^e}  \,\,\, \bigg| \,\,\, f^N \notin \mathfrak{m}^{[p^e]} \right\} =  \inf \left\{\frac{N}{p^e}  \,\,\, \bigg| \,\,\, f^N \in \mathfrak{m}^{[p^e]} \right\},
$$
where  $\mathfrak{m}^{[p^e]} $ denotes the Frobenius power $\langle x_1^{p^e}, \dots, x_n^{p^e}\rangle$ of $\mathfrak m$.
\end{definition}

While first explicitly   defined  (in a more general setting) by Takagi and Watanabe  \cite{takagi+watanabe.F-pure_thresholds} as the "threshold $c$" beyond which the pair $(S, f^{c})$ 
fails to be $F$-pure (see  \cite{HaraYoshida}),   the definition above  is a reformulation that has evolved through the work of many authors 
(e.g., see    \cite{mustata+takagi+watanabe.F-thresholds}, \cite{blickle+mustata+smith.discr_rat_FPTs}).  A  gentle introduction can be found in the survey \cite{benito+faber+smith.measuring_sing_with_frob}.

\medskip
Although not obvious, the $F$-pure threshold is in fact a {\it rational number} \cite{blickle+mustata+smith.discr_rat_FPTs}. 
Further basic  properties,   including some  immediate upper and lower bounds well-known to experts, are summarized  below in the  setting we will need them: 

\medskip
\begin{proposition}  Let $f$ be a homogeneous form of degree $d>0$ over a field $k$ of characteristic $p>0$.  Then 
\begin{enumerate}\label{easy}
\item  $\fpt(f) \leq  1$.
\item 
For any $r\geq 1$, we have $\fpt(f^r)= \frac{\fpt(f) }{r}.
$
\item  $\fpt(f) \geq \frac{1}{d}$, with equality when $f$ is a power of a linear form; \cite[4.1]{takagi+watanabe.F-pure_thresholds}.
\item If $f$ is in $n$ variables, then  $\fpt (f) \leq  \frac{n}{d}$.
\end{enumerate}
\end{proposition}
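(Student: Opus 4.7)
The plan is to prove the four items in an order that lets the later ones build on the earlier ones: first (1) by a Frobenius expansion, then (3) and (4) by dual degree counts, then the scaling identity (2), and finally the equality clause of (3) as a corollary.

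For (1), since $f$ is homogeneous of positive degree, $f\in\mathfrak m$, so write $f=x_1 g_1+\cdots+x_n g_n$. Because the $p^e$-th power map is additive in characteristic $p$, we get $f^{p^e}=x_1^{p^e}g_1^{p^e}+\cdots+x_n^{p^e}g_n^{p^e}\in\mathfrak m^{[p^e]}$ for every $e$, which via the $\inf$ formulation gives $\fpt(f)\le p^e/p^e=1$.

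For (3) and (4) I would use two dual observations about $\mathfrak m^{[p^e]}$: every monomial in $\mathfrak m^{[p^e]}$ has total degree at least $p^e$, while conversely every monomial in $n$ variables of degree at least $n(p^e-1)+1$ must, by pigeonhole, lie in $\mathfrak m^{[p^e]}$. Applied to the homogeneous polynomial $f^N$ of degree $dN$: if $dN\le p^e-1$ then $f^N\ne 0$ cannot lie in $\mathfrak m^{[p^e]}$, whence $\fpt(f)\ge \lfloor(p^e-1)/d\rfloor/p^e\to 1/d$; while if $dN\ge n(p^e-1)+1$ then $f^N\in\mathfrak m^{[p^e]}$, whence $\fpt(f)\le \lceil(n(p^e-1)+1)/d\rceil/p^e\to n/d$.

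For (2), the identity $(f^r)^N=f^{rN}$ translates containment in $\mathfrak m^{[p^e]}$ directly; setting $\nu_g(p^e):=\max\{N:g^N\notin\mathfrak m^{[p^e]}\}$ gives $\nu_{f^r}(p^e)=\lfloor \nu_f(p^e)/r\rfloor$, and dividing by $p^e$ and passing to the limit yields $\fpt(f^r)=\fpt(f)/r$. For the equality clause of (3), parts (1) and (3) together force $\fpt(L)=1$ for every linear form $L$, and then (2) applied to $f=L^d$ gives $\fpt(L^d)=1/d$.

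The only delicate point, and hence the main (minor) obstacle, is justifying in (2) that the sequence $\nu_g(p^e)/p^e$ actually converges to the sup/inf defining $\fpt(g)$. This follows from the elementary inequality $\nu_g(p^{e+1})\ge p\,\nu_g(p^e)$ (proved by raising a witness monomial outside $\mathfrak m^{[p^e]}$ to the $p$-th power and invoking Frobenius on monomial supports), which makes $\nu_g(p^e)/p^e$ non-decreasing; combined with the upper bound from (4), the limit exists and equals $\fpt(g)$. Everything else reduces to Frobenius expansion, a degree count, or pigeonhole.
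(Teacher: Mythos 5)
Your proof is correct and follows essentially the same strategy as the paper: (1) is a trivial witness at a single exponent, and (3) and (4) are the same degree-count and pigeonhole arguments (the paper uses $r_e = \lfloor p^e/d\rfloor - 1$ and $\mathfrak m^{n(p^e-1)+1}\subset\mathfrak m^{[p^e]}$, identical to your bounds). The one place you go further is part (2), which the paper dispatches with "follows easily from the definition"; your argument via $\nu_g(p^e)=\max\{N: g^N\notin\mathfrak m^{[p^e]}\}$, the monotonicity $\nu_g(p^{e+1})\ge p\,\nu_g(p^e)$, and the resulting convergence $\nu_g(p^e)/p^e\to\fpt(g)$ is a clean, self-contained justification, and your deduction of the equality clause of (3) from (1), (2), and (3) replaces the paper's citation to Takagi--Watanabe with a direct proof.
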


\begin{proof}
For (1), since $f^1\in \mathfrak m^{[p^0]} = \mathfrak m$, $1\in   \left\{\frac{N}{p^e}  \,\,\, \bigg| \,\,\, f^N \in \mathfrak{m}^{[p^e]} \right\}$.  So the infimum of this set  is at most one, always. The second statement similarly  follows easily from the definition.

For (3), we argue as follows.  Let $r_e = {\lfloor p^e/d \rfloor -1}$. Then $f^{r_e}$ has degree less than $p^e$, so is not in $\mathfrak m^{[p^e]}$. So $\frac  {\lfloor p^e/d \rfloor -1}{p^e} \in  \left\{\frac{N}{p^e}  \,\,\, \bigg| \,\,\, f^N \notin \mathfrak{m}^{[p^e]} \right\}$ for all $e$, and so the supremum is at least 
$ \frac  {\lfloor p^e/d \rfloor -1}{p^e} $ for all $e$. Since these converge from below to $\frac{1}{d}$ as $e$ goes to infinity, the $F$-pure threshold is at least $\frac{1}{d}$.

For (4), observe that 
$
\mathfrak m^m \subset \mathfrak m^{[p^e]}
$ 
for any $m\geq np^e-n+1$.
So  if $f$ has degree $d$, then
for any natural number  $r_e$ such that $r_e\geq \frac{np^e-n+1}{d}$, we have
$$f^{r_e} \in \mathfrak m^{[p^e]}.$$
In particular, this is true for  $$r_e = \left \lceil   \left(\frac{np^e-n+1}{d} \right) \right \rceil =    \left\lceil p^{e} \left( \frac{n}{d} - \frac{n}{dp^e}  + \frac{1}{dp^e} \right) \right\rceil  .$$
This means that 
$$\frac{r_e}{{p^e}}  \in 
  \left\{\frac{N}{p^e}  \,\,\, \bigg| \,\,\, f^N \in \mathfrak{m}^{[p^e]} \right\}
  $$
  for all $e. $  Since  $\frac{r_e}{{p^e}}$ converges to $\frac{n}{d}$ from above, the infimum of this set is at most  $\frac{n}{d}$. 
\end{proof}

\bigskip
The idea is that "worse" singularities have smaller $F$-pure threshold, just as smaller log canonical thresholds for a complex singularity indicate "worse singularities." 

 \subsection{Connection with log canonical threshold}\label{charzero}
The  {\it log canonical threshold} is an invariant of a complex singularity that can be defined using either integrability or Hironaka's resolution of singularities. For a $\mathbb Q$-divisor $D$ on a smooth complex variety $X$, it is the threshold beyond which the pair $(X, cD)$ fails to be log canonical; see, for example,   \cite{kollar}.

A form  $f$ over the complex numbers determines a collection of forms over  fields of characteristic $p$, for varying  $p$, as follows. Let $A$ be the finitely generated subring of $\mathbb C$ obtained by adjoining all the complex coefficients of $f$ to $\mathbb Z$. Interpreting the form $f$ as an element of $A[x_1, \dots, x_n]$, for each  $\mu\in \, \operatorname{maxSpec} A$, 
we define  $f_{\mu}$ to be  the image of $f$ in the quotient ring $ A/\mu[x_1, \dots, x_n]$, a polynomial ring over the  finite field $ A/\mu$. 

\medskip
The work of Hara and Yoshida \cite{HaraYoshida} and
 Takagi and Watanabe \cite{takagi+watanabe.F-pure_thresholds} implies that the log canonical threshold of $f$ is
\begin{equation}\label{lct}
\sup \left\{ \fpt(f_{\mu}) \,\, | \,\, \mu \in \, \operatorname{maxSpec} \, A\right\}.
\end{equation}
In particular,  any lower bound on the $F$-pure threshold (independent of $p$) implies one for the log canonical threshold.  In particular, our Theorem \ref{main} implies that the 
 log canonical threshold of a reduced complex form of degree $d$ is bounded below by $\frac{1}{d-1}.$ 
 However, this bound is far from sharp; see paragraph \ref{EM}. 
 
 \medskip
On the other hand, (\ref{lct}) implies that upper bounds for the log canonical threshold suggest tight upper bounds for the $F$-pure threshold. 
For example, it is easy to compute that a degree $d$ form over $\mathbb C$ defining an isolated singularity has log canonical threshold $\min(\frac{n}{d}, 1)$, and indeed,  
 the proof of Proposition \ref{easy}  (4)  above shows that  $\min(\frac{n}{d}, 1)$ is what we might expect for the $F$-pure threshold of most general  forms  $f$ of degree $d$ in $n$ variables. This  intuition  is made precise in \cite{Hernandez}.
 
  One research thread in the literature is concerned with understanding the extent to which the $F$-pure threshold pushes up against these theoretic {\it upper} bounds. 
 A long-standing open conjecture predicts that when  $f$ is obtained by reduction to characteristic $p$ from a polynomial  over $\mathbb C$, then  for infinitely many $p$, the $F$-pure threshold  will be equal to the log canonical threshold of the corresponding complex singularity; see, for example, 
 \cite[3.6]{mustata+takagi+watanabe.F-thresholds}.
 
 \medskip

 In this paper, we tackle the opposite question: find general {\it  lower bounds}  on the $F$-pure threshold in terms of degree and investigate the extent to which we push up against these bounds.


\section{Lower Bounds on the $F$-pure Threshold}\label{LowerBounds}

 In this section, we establish the lower bound of our main theorem, Theorem \ref{main1},  by proving the following:

 \begin{theorem}\label{main}
 Fix an   arbitrary  field $k$ of characteristic $p>0$.
Let $f\in k[x_1, \dots, x_n]$ be a homogeneous geometrically reduced polynomial of degree  $d=\mathrm{deg}(f)\geq 2$.
Then 
\begin{equation}\label{Lowerbound}
\fpt(f) \ge \frac{1}{d-1}
\end{equation}
Furthermore,  if equality  holds in  (\ref{Lowerbound}), then   $d=q+1$, where $q\geq 1$ is a power of $p$.
\end{theorem}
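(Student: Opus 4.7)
The plan is to prove the inequality by exhibiting one specific monomial in the expansion of $f^N$ that survives modulo $\mathfrak m^{[p^e]}$ whenever $N(d-1) < p^e$, and then to refine that construction to handle the equality case.

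For the inequality, I first reduce to the case $k = \bar k$, using that the $F$-pure threshold is preserved under faithfully flat extension of the ground field. Since $f$ is reduced, the projective hypersurface $V(f) \subset \mathbb P^{n-1}_{\bar k}$ contains a smooth $\bar k$-rational point $P$. A linear change of coordinates, which preserves both $\fpt$ and $\mathfrak m$, allows me to take $P = [1:0:\cdots:0]$ with tangent direction along $x_2$, putting $f$ into the shape
$$f \;=\; x_1^{d-1} x_2 \;+\; g,$$
where every monomial of $g$ has $x_1$-degree at most $d-2$. The monomial $x_1^{N(d-1)} x_2^N$ appears in the multinomial expansion of $f^N$ with coefficient exactly one, since it can only be obtained by selecting $x_1^{d-1} x_2$ from every one of the $N$ factors---any pick from $g$ strictly drops the $x_1$-degree. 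When $N(d-1) < p^e$, every exponent of this monomial is less than $p^e$, so $f^N \notin \mathfrak m^{[p^e]}$. Taking $N = \lceil p^e/(d-1) \rceil - 1$ and letting $e \to \infty$ gives $\fpt(f) \ge 1/(d-1)$.

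For the equality case, suppose $\fpt(f) = 1/(d-1)$. Writing $\mu(e) = \max\{N : f^N \notin \mathfrak m^{[p^e]}\}$, the sequence $\mu(e)/p^e$ is non-decreasing with limit $\fpt(f)$, so combining the lower bound with the equality hypothesis pins down $\mu(e) = \lfloor p^e/(d-1) \rfloor$ whenever $(d-1) \nmid p^e$. In that case, setting $N = \mu(e) + 1$ gives $m := N(d-1) - p^e \in [1, d-2]$ and $f^N \in \mathfrak m^{[p^e]}$. The strategy is to extract additional monomials from
$$f^N \;=\; \sum_{k=0}^{N} \binom{N}{k}(x_1^{d-1} x_2)^{N-k} g^k.$$
The $k = 1$ contribution, paired with a monomial of $g$ whose $x_1$-degree is small enough (say $a \le d-2-m$), produces a monomial in $f^N$ with every exponent strictly below $p^e$; requiring each such monomial to vanish in $f^N \bmod \mathfrak m^{[p^e]}$ forces its total coefficient---including contributions from higher $k$---to be zero. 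Combined with the Lucas-theoretic constraints on $\binom{N}{k} \bmod p$ and iterated across infinitely many $e$, these vanishing conditions will be shown to be consistent only when $(d-1) \mid p^e$ for some $e$, i.e., when $d-1$ is a power of $p$.

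The main obstacle is the cancellation analysis in the equality case. The coefficient of a given target monomial receives contributions from several values of $k$ at once, and these interact in intricate ways across different target monomials and different $e$. Pushing the argument through will require a careful base-$p$ digit analysis of $\lfloor p^e/(d-1) \rfloor$---whose structure distinguishes the $p$-power case from the non-$p$-power case---together with the inductive tool $\mu(e+1) \ge p \mu(e)$ to transport a non-vanishing contribution from one level to the next. The inequality itself, by contrast, follows cleanly from the single monomial argument sketched above.
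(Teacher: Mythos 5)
Your inequality argument is correct and takes a genuinely different route from the paper's. The paper reduces to two variables via Bertini and a modding-out lemma, factors $f = xyg$ after a coordinate change, and uses the regular-sequence trick on $\{x,y\}$ to extract the degree inequality $N(d-2) \ge p^e - N$. You instead normalize $f$ at a smooth point of $V(f)$ (which exists since $f$ is reduced and $k=\bar k$) so that $f = x_1^{d-1}x_2 + g$ with $\deg_{x_1} g \le d-2$, and observe that $x_1^{N(d-1)} x_2^N$ appears in $f^N$ with coefficient exactly $1$ and lies outside $\mathfrak m^{[p^e]}$ whenever $N(d-1) < p^e$. Both proofs are short; yours avoids the two-variable reduction and Bertini's theorem, at the minor cost that it does not by itself cover the weakening to ``not a power of a linear form'' noted in the paper's Remark 3.2, since in that generality $V(f)$ need not have a smooth point (e.g.\ $\ell_1^2\ell_2^2$).

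The equality case, however, contains a genuine gap. You describe a strategy --- take $N = \mu(e)+1$, extract target monomials from the $k\ge 1$ terms of $\sum_k \binom{N}{k}(x_1^{d-1}x_2)^{N-k}g^k$, and control the resulting coefficients via Lucas-type constraints across infinitely many $e$ --- but you do not carry it out, and you say yourself that the cancellation analysis is the ``main obstacle'' and that pushing it through ``will require'' further work. More to the point, it is unclear where your sketch uses reducedness beyond the existence of a smooth point, and that cannot suffice: $f = x^6 y$ has the smooth point $[1:0]$, fits your normal form with $g=0$, and attains $\fpt = 1/(d-1) = 1/6$ in every characteristic, yet $6$ is not a power of $p$. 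The paper's equality argument uses squarefreeness in an essential and localized way --- from $A^{p^{E-e}} = fC$ it concludes $f \mid A$ by unique factorization, which fails precisely for examples like $x^6y$ --- and your sketch has no counterpart to that step. Until you can point to where the distinctness of the irreducible factors of $f$ (not just the existence of a smooth point) enters and forces the vanishing conditions to be inconsistent unless $d-1$ is a $p$-power, the equality half is not a proof.
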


\medskip

Having degree $p^e+1$ is  necessary but not sufficient to achieve the lower bound (\ref{Lowerbound}) on the $F$-pure threshold: some forms of degree $p^e+1$ are more singular than others. Characterizing these "maximally singular" forms where  the lower bound is achieved is the task of the next section; see Theorem \ref{justify}. 

\medskip

\begin{remark} 
For the bound (\ref{Lowerbound}), a careful reading of the proof  shows that  our hypothesis can be weakened from "reduced" to "not a power of a linear form." 
But for powers of linear forms, of course, the lower bound is false: the $F$-pure threshold of $x^d$ is $\frac{1}{d}$. 

For the subsequent statement about what happens when the  lower bound is achieved,  however, we need the "reduced" hypothesis. For example, 
$x^6y$ has $F$-pure threshold $\frac{1}{6} = \frac{1}{d-1}$ in every characteristic, but $6$ is not a power of $p$. 
\end{remark}

\smallskip
\begin{remark}
The main theorem of \cite{BhattSingh} can be interpreted to give a lower bound of $1-\frac{d-2}{p}$ on the $F$-pure threshold in the very special case where the degree $d$ of the form $f$ is equal to  the number of variables $n$, the projective hypersurface defined by $f$ is smooth, and the characteristic $p>n=d$. (See also \cite{Mueller1} for the quasi-homogeneous case.)  However, because of the restriction that $p>d$, the Bhatt-Singh bound never applies in our extremal case.  
\end{remark}

\medskip
Before beginning the proof of Theorem \ref{main}, we need a few lemmas.
The first allows us to assume the ground field is algebraically closed.

\begin{lemma}\label{extend}
Let $k\subset k'$ be any field extension, of  characteristic  $p>0$. For any $f\in k[x_1, \dots, x_n]$, the $F$-pure threshold is independent of whether we view $f$ as a polynomial over $k$ or over $k'$.
\end{lemma}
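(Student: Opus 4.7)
The plan is to verify, for each fixed pair $(N,e)$, that membership $f^N \in \mathfrak{m}^{[p^e]}$ is unchanged upon passing from $k$ to $k'$. Since the $F$-pure threshold is defined purely in terms of the sets $\{N/p^e : f^N \in \mathfrak{m}^{[p^e]}\}$, establishing this equivalence for all $(N,e)$ immediately yields equality of the infima computed over $k$ and over $k'$.

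One direction is trivial: any expression $f^N = \sum_i g_i x_i^{p^e}$ with $g_i \in k[x_1,\dots,x_n]$ persists verbatim after scalar extension to $k'$. So the serious content is the other direction: if $f^N$ lies in the Frobenius power computed over $k'$, then it already lies there over $k$. I would prove this by an explicit $k$-basis argument, which also makes the statement seem less mysterious than if one simply invokes faithful flatness of $k'[x_1,\dots,x_n]$ over $k[x_1,\dots,x_n]$.

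Concretely, choose a $k$-vector space basis $\{e_\alpha\}_{\alpha \in A}$ of $k'$ containing $1$, say $e_{\alpha_0}=1$. Then $k'[x_1,\dots,x_n] = \bigoplus_{\alpha \in A} k[x_1,\dots,x_n]\cdot e_\alpha$ as a $k[x_1,\dots,x_n]$-module. Suppose $f^N = \sum_{i=1}^n g_i\, x_i^{p^e}$ with $g_i \in k'[x_1,\dots,x_n]$, and write each $g_i = \sum_\alpha g_{i,\alpha}e_\alpha$ with $g_{i,\alpha} \in k[x_1,\dots,x_n]$. Collecting terms gives
$$
f^N \;=\; \sum_{\alpha \in A}\Bigl(\sum_{i=1}^n g_{i,\alpha}\, x_i^{p^e}\Bigr) e_\alpha.
$$
Because $f^N$ itself lies in $k[x_1,\dots,x_n]$, uniqueness of expansions in the basis $\{e_\alpha\}$ forces only the $\alpha_0$-component to survive; hence $f^N = \sum_i g_{i,\alpha_0}\, x_i^{p^e}$ with $g_{i,\alpha_0} \in k[x_1,\dots,x_n]$, as desired.

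There is no real obstacle: once one writes down the basis decomposition, the proof reduces to the observation that the Frobenius power $\mathfrak{m}^{[p^e]}$ is generated by elements of $k[x_1,\dots,x_n]$ (indeed, by monomials in the $x_i$), so the contraction coincides with the original ideal. The only mild care needed is choosing the basis to contain $1$ so that the identification of $f^N$ with its $\alpha_0$-component is immediate.
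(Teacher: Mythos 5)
Your proof is correct and proves the same key contraction fact the paper relies on, $\mathfrak{m}_A^{[p^e]} B \cap A = \mathfrak{m}_A^{[p^e]}$, but by a different (more elementary) route. The paper simply observes that $A = k[x_1,\dots,x_n] \hookrightarrow B = k'[x_1,\dots,x_n]$ is faithfully flat, so $IB \cap A = I$ for every ideal $I$ of $A$, and is done in one line. You instead unwind what faithful flatness is doing in this special case: choose a $k$-basis $\{e_\alpha\}$ of $k'$ with $e_{\alpha_0} = 1$, observe $B = \bigoplus_\alpha A \cdot e_\alpha$ as an $A$-module, and read off the $\alpha_0$-component of the relation $f^N = \sum_i g_i x_i^{p^e}$. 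This is a valid and self-contained argument — in fact it is essentially the proof that a free extension has the contraction property, specialized to the ideal at hand. The paper's route is shorter and leans on a standard tool; yours is longer but more transparent and requires no commutative-algebra black box. One small remark: you do not actually need the basis to contain $1$ — it suffices that $1$ has a well-defined coordinate expansion, and projecting onto the coordinate where $1$ has a nonzero coefficient would also work — but insisting $e_{\alpha_0} = 1$ is certainly the cleanest choice and makes the final identification immediate, as you note.
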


\begin{proof}
Let  $ A = k[x_1, \dots, x_n]$ have homogeneous maximal ideal $\mathfrak m_A$ and $ B = k'[x_1, \dots, x_n]$ have homogeneous maximal ideal $\mathfrak m_B.$
Note that $A\subset B$ is faithfully flat so that $IB\cap A = I$ for any ideal $I$ of $A$.

Now, if $f^N \in \mathfrak m_A^{[p^E]},$ then the same is true in $B$. So the $F$-pure threshold over $A$ is at least the $F$-pure threshold over $B$.
But conversely, if  $f^N \in \mathfrak m_B^{[p^E]}= \mathfrak m_A^{[p^E]}B$,  then intersecting with $A$, we have   $f^N \in \mathfrak m_A^{[p^E]}B\cap A =  \mathfrak m_A^{[p^E]}$. This gives the reverse inequality. 
\end{proof}

The next is a codification of a well-known trick  we use many times, whose statement we make explicit for the convenience of the reader. The proof we leave as an exercise.

\begin{lemma}\label{reg} Let $y_1, \dots, y_n$ be a regular sequence in a commutative ring.  Suppose there exists an element  $g$, and natural numbers $a_i\leq N_i$ such that  
$$(y_1^{a_1}y_2^{a_2}\dots y_n^{a_n})g \in \langle y_1^{N_1},  y_2^{N_2}, \dots, y_n^{N_n}\rangle.$$
Then  $g \in  \langle y_1^{N_1-a_1},  y_2^{N_2-a_2},\, \dots,\, y_n^{a_N-a_n}\rangle.$
\end{lemma}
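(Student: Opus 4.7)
The plan is to prove the lemma by iteratively cancelling the factors $y_i^{a_i}$ from the product, one variable at a time, exploiting the basic properties of regular sequences. Recall the standard fact that if $y_1,\dots,y_n$ form a regular sequence in a commutative ring, then so do any positive-integer powers $y_1^{m_1},\dots,y_n^{m_n}$, and the regular sequence property is preserved under permutation of the entries.

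First I would establish, as a warm-up, the following single-variable cancellation statement: if $z$ is a non-zerodivisor modulo an ideal $J$, and $z^a h \in J + \langle z^N\rangle$ with $a \leq N$, then $h \in J + \langle z^{N-a}\rangle$. This is immediate from the definitions: write $z^a h = j + r z^N$ with $j \in J$, rewrite the right-hand side as $j + z^a\cdot r z^{N-a}$, and note that $z^a (h - r z^{N-a}) \in J$. Because $z$ (and hence $z^a$) acts injectively on $R/J$, this forces $h - r z^{N-a} \in J$.

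The main argument then proceeds by $n$ successive applications of this warm-up. Starting from $y_1^{a_1}\cdots y_n^{a_n}\, g \in \langle y_1^{N_1},\dots,y_n^{N_n}\rangle$, I would apply the cancellation with $z = y_1$ and $J = \langle y_2^{N_2},\dots,y_n^{N_n}\rangle$, observing that $y_1$ is a non-zerodivisor on $R/J$ since $y_1,\,y_2^{N_2},\dots,y_n^{N_n}$ is a regular sequence. This yields
$$y_2^{a_2}\cdots y_n^{a_n}\, g \;\in\; \langle y_1^{N_1-a_1},\, y_2^{N_2},\dots, y_n^{N_n}\rangle.$$
Iterating with $y_2$, then $y_3$, and so on—each time noting that the generators of the current ideal, together with the variable being cancelled, still form a regular sequence after a suitable permutation—produces the desired containment
$$g \;\in\; \langle y_1^{N_1-a_1},\, y_2^{N_2-a_2},\dots, y_n^{N_n-a_n}\rangle.$$

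There is no real obstacle here: the only thing to verify at each stage is that the variable being cancelled remains a non-zerodivisor modulo the rest of the ideal, which is guaranteed by the preservation of the regular-sequence property under taking powers and permuting. This is presumably why the authors left the proof as an exercise.
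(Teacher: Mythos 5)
The paper leaves this lemma ``as an exercise,'' so there is no written proof to compare against; your proposal is the natural iterated-cancellation argument they presumably have in mind, and the warm-up step is correct and cleanly stated. However, there is a genuine gap in the main iteration, and it sits precisely in the sentence you intended to dispose of the issue: the claim that ``the regular sequence property is preserved under permutation of the entries'' is \emph{false} for a general commutative ring, which is exactly the level of generality in which the lemma is stated. The classical counterexample is in $k[x,y,z]$: the sequence $x,\, y(1-x),\, z(1-x)$ is regular, but the permuted sequence $y(1-x),\, z(1-x),\, x$ is not, since $z(1-x)\cdot y = y(1-x)\cdot z$ while $y\notin\langle y(1-x)\rangle$. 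Your very first application already relies on the false fact: from ``$y_1,\,y_2^{N_2},\dots,y_n^{N_n}$ is a regular sequence'' you infer that $y_1$ is a non-zerodivisor modulo $J=\langle y_2^{N_2},\dots,y_n^{N_n}\rangle$, but that would require the \emph{permuted} sequence $y_2^{N_2},\dots,y_n^{N_n},\,y_1$ to be regular, which you have not established. The same problem recurs at each later step.

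The lemma \emph{is} true in the stated generality, but justifying that $y_1$ is a non-zerodivisor on $R/\langle y_2^{N_2},\dots,y_n^{N_n}\rangle$ takes an extra idea. One clean route: suppose $y_1 h = \sum_{i\ge 2} c_i y_i^{N_i}$; reducing modulo $y_1$ gives a relation $\sum_{i\ge 2}\bar c_i\bar y_i^{N_i}=0$ among the images of the $y_i^{N_i}$ in $R/\langle y_1\rangle$, and since $\bar y_2,\dots,\bar y_n$ (hence $\bar y_2^{N_2},\dots,\bar y_n^{N_n}$) is a regular sequence there, its only relations are Koszul relations. Lifting a Koszul expression back to $R$, the Koszul terms cancel in pairs and one finds $y_1\bigl(h-\sum_{i\ge 2}e_i y_i^{N_i}\bigr)=0$ for suitable $e_i$; as $y_1$ is a non-zerodivisor in $R$, this forces $h\in\langle y_2^{N_2},\dots,y_n^{N_n}\rangle$. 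Alternatively, in the settings where the paper actually invokes the lemma---polynomial rings over a field with homogeneous regular sequences, or more generally Noetherian local rings with the $y_i$ in the maximal ideal---permutability \emph{is} a theorem, so your argument is fine there; but since you are proving the lemma exactly as stated (``in a commutative ring'') you should either add the hypothesis under which permutation is legitimate, or replace the appeal to permutability with a correct argument such as the one above.
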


\bigskip 
The next two lemmas will be used to reduce
Theorem \ref{main} to the case of two variables.

\medskip

\begin{lemma}\label{dumbbound} Let $f$ be a homogeneous polynomial  in $k[x_1,\dots, x_n]$. For any linear form $L$ not dividing $f$, let 
$\bar{f}$ denote the image  of $f$  in $k[x_1, \dots, x_n]/\langle L \rangle \cong k[x_1, \dots, x_{n-1}]$ .
Then
$$ \fpt (f) \geq   \fpt (\bar{f}).$$ 
\end{lemma}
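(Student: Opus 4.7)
My plan is to use the infimum form of the definition of the $F$-pure threshold (Definition~\ref{FPureDef}), and exhibit an inclusion of the defining sets under the reduction modulo $L$. The key point is that the surjection $R = k[x_1,\dots,x_n] \twoheadrightarrow R/\langle L \rangle \cong k[x_1,\dots,x_{n-1}]$ is compatible with Frobenius powers of the maximal ideal, so containment in $\mathfrak m^{[p^e]}$ passes to containment in $\overline{\mathfrak m}^{[p^e]}$.

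First I would reduce to the case $L = x_n$ by a linear change of coordinates. Extend $L$ to a $k$-basis $L = L_1, L_2, \dots, L_n$ of the space of linear forms, and use these as new variables. Such a change of coordinates is a graded $k$-algebra automorphism of $R$ fixing the maximal ideal $\mathfrak m$; it also fixes every Frobenius power $\mathfrak m^{[p^e]}$, since for any linear form $\ell = \sum a_i x_i$ one has $\ell^{p^e} = \sum a_i^{p^e} x_i^{p^e} \in \mathfrak m^{[p^e]}$. In particular, $\fpt(f)$ is unchanged, so I may assume $L = x_n$ and $\overline f = f(x_1,\dots,x_{n-1},0)$.

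Now the projection $\pi \colon R \twoheadrightarrow k[x_1,\dots,x_{n-1}]$ sending $x_n \mapsto 0$ carries $\mathfrak m^{[p^e]} = \langle x_1^{p^e}, \dots, x_n^{p^e}\rangle$ onto $\overline{\mathfrak m}^{[p^e]} = \langle x_1^{p^e}, \dots, x_{n-1}^{p^e}\rangle$. Hence if $f^N \in \mathfrak m^{[p^e]}$, applying $\pi$ yields $\overline f^N \in \overline{\mathfrak m}^{[p^e]}$, so
\[
\left\{\tfrac{N}{p^e} : f^N \in \mathfrak m^{[p^e]}\right\} \;\subseteq\; \left\{\tfrac{N}{p^e} : \overline f^N \in \overline{\mathfrak m}^{[p^e]}\right\}.
\]
Taking infima gives the desired inequality $\fpt(f) \geq \fpt(\overline f)$.

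The hypothesis that $L$ does not divide $f$ is used only to guarantee that $\overline f \neq 0$, so that $\fpt(\overline f)$ is the genuine $F$-pure threshold of a nonzero homogeneous polynomial rather than a degenerate expression. There is essentially no obstacle here; the lemma is a functoriality statement expressing the intuition that a hyperplane section of a hypersurface can only be as singular or more singular than the original, as measured by the $F$-pure threshold.
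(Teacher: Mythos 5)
Your proof is correct and takes essentially the same approach as the paper: both use the fact that $f^N \in \mathfrak m^{[p^e]}$ implies $\overline f^N \in \overline{\mathfrak m}^{[p^e]}$ to obtain an inclusion of the defining sets, and then compare infima. The only difference is that you spell out the linear change of coordinates reducing to $L = x_n$ (and note that Frobenius powers of $\mathfrak m$ are stable under such changes because $p^e$-th powers of linear forms are additive), a step the paper leaves implicit.
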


\begin{proof}
Suppose $f^N\in \mathfrak m^{[p^e]}$. Then also  ${\bar f}^N\in  \bar{\mathfrak m}^{[p^e]}$. So we have an inclusion of sets
 $$ \left\{\frac{N}{p^e}  \,\,\, \bigg| \,\,\, f^N \in \mathfrak{m}^{[p^e]} \right\} \subseteq
  \left\{\frac{N}{p^e}  \,\,\, \bigg| \,\,\, {\bar f}^N \in  \bar{\mathfrak m}^{[p^e]} \right\}.$$
  So the infimum of the left-hand set is at least the infimum of the right-hand set. That is, $ \fpt (f) \geq   \fpt (\bar{f})$.\end{proof}

\medskip
\begin{remark}
Lemma \ref{dumbbound} can be used to give a quick proof of the easy lower bound $\fpt(f)\geq \frac{1}{d}$, where $d=\deg(f)$,   shown Proposition \ref{easy}. 
Namely, by modding out $n-1$ linearly independent one-forms, we eventually reduce to the one-variable case, and have $\fpt(f)\geq \fpt(x^d) =  \frac{1}{d}$.
\end{remark}

\medskip

Finally, we need the following Bertini-type theorem for reduced varieties:
\begin{lemma}\label{bertini} Consider a polynomial ring $k[x_1, \dots, x_n]$ in at least three variables over an infinite field. 
Fix a reduced form $f$ in   $k[x_1, \dots, x_n]$, and  for any linear form $L$, let $\bar{f}$ denote the image of $f$ in the quotient ring $k[x_1, \dots, x_n]/\langle L\rangle \cong k[x_1, \dots, x_{n-1}].$
 If $L$ is sufficiently general, then the polynomial 
$ \bar{f}$ is also reduced in the polynomial ring $k[x_1, \dots, x_{n-1}]$. 
\end{lemma}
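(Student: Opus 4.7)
The plan is to factor $f$ into distinct irreducible factors and then show separately that each factor remains squarefree and that the factors remain pairwise coprime after the restriction, for $L$ in a suitable dense open set of linear forms. The conclusion then follows by intersecting these opens and using that $k$ is infinite to extract a $k$-rational $L$.

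Specifically, write $f = f_1 f_2 \cdots f_r$ with the $f_i$ distinct irreducible forms in $k[x_1, \ldots, x_n]$, so that $\bar f = \bar{f_1}\bar{f_2} \cdots \bar{f_r}$. To conclude that $\bar f$ is squarefree it suffices to check, for a general linear form $L$, both that (a) each $\bar{f_i}$ is squarefree in $k[y_1, \ldots, y_{n-1}]$, and that (b) $\bar{f_i}$ and $\bar{f_j}$ share no common factor when $i \neq j$.

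For (a), each projective hypersurface $X_i = V(f_i) \subset \mathbb{P}^{n-1}_k$ is integral of dimension $n - 2 \geq 1$; a classical Bertini theorem for reducedness (see, for instance, Jouanolou's \emph{Th\'eor\`emes de Bertini et applications}) then produces a dense open subset $U_i$ of the parameter space of linear forms for which the hyperplane section $V(L) \cap X_i$ is reduced, equivalently $\bar{f_i}$ is squarefree. For (b), since the distinct irreducibles $f_i, f_j$ form a regular sequence, the common zero locus $V(f_i) \cap V(f_j) \subset \mathbb{P}^{n-1}_k$ has dimension at most $n - 3$, and a general hyperplane cut drops this by one, giving a dense open $U_{ij}$ on which $V(L) \cap V(f_i) \cap V(f_j)$ has dimension at most $n - 4$. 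A common factor of $\bar{f_i}$ and $\bar{f_j}$ would, however, cut out a hypersurface inside $V(L) \cong \mathbb{P}^{n-2}_k$, of dimension $n - 3$; since $n \geq 3$ forces $n - 3 > n - 4$, no such common factor can exist.

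Picking $L$ in the intersection $\bigcap_i U_i \cap \bigcap_{i \neq j} U_{ij}$, which is a dense open in the linear forms and hence nonempty over the infinite field $k$, yields $\bar f$ reduced. I expect the main delicate point to be the Bertini step (a) when $k$ is imperfect, since the most classical formulations of Bertini assume a perfect ground field; this is handled either by appealing to a version of Bertini valid over arbitrary infinite fields, or by verifying directly that the locus of $L$ for which $\bar{f_i}$ is squarefree is nonempty and open, by analysis of the universal hyperplane section with coefficients in $k(a_1, \ldots, a_n)$ and using that purely transcendental extensions are separable.
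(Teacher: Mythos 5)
Your proposal is correct, but it takes a genuinely different route from the paper. The paper's proof is a one-line citation of Flenner--O'Carroll--Vogel [3.4.14], which gives reducedness of a general hyperplane section of any \emph{reduced} projective scheme over an infinite field, applied directly to $V(f) \subset \mathbb{P}^{n-1}$. You instead split $f$ into distinct irreducible factors and handle two issues separately: squarefreeness of each $\bar f_i$ (by Bertini applied to the integral hypersurface $V(f_i)$) and pairwise coprimality of the $\bar f_i$ (by a Krull-height dimension count: $V(f_i) \cap V(f_j)$ has codimension $2$, a general hyperplane drops its dimension by one, and any common factor of $\bar f_i, \bar f_j$ would cut a hypersurface of dimension $n-3 > n-4$ inside that intersection). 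In effect you are deriving the reduced-scheme version of Bertini from the integral-scheme version in the hypersurface setting, with the "components stay distinct" step made explicit and elementary rather than buried in the cited theorem. This is more work than the paper's citation, but it localizes exactly where the possibility of non-reducedness could enter and may be preferable for a reader who only wants to invoke Bertini for integral varieties. One small caveat: in step (b) you should note that for general $L$ none of the $\bar f_i$ vanish (trivial unless $f_i$ is linear, in which case only $L \propto f_i$ is bad), so that a common irreducible factor of $\bar f_i, \bar f_j$ really does define a hypersurface. The imperfect-field subtlety you flag in step (a) is real, and the universal-hyperplane-section argument you sketch is the standard fix; alternatively one can simply cite the same reference the paper uses, which already covers infinite (not necessarily perfect) fields.
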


\begin{proof}
This is essentially a restatement of \cite[3.4.14]{Flenner-OCarroll-Vogel}, which implies that (over an infinite field) a general hyperplane section of a reduced scheme
$X\subset \mathbb P^{n-1}$ is reduced: because $f$ is reduced, the projective variety  defined by $f$ in $\mathbb P^{n-1}$ is reduced, so also the general hyperplane section---namely, the variety defined by 
$f$ and $L$---is reduced. Because the homogeneous coordinate ring of this hyperplane section is isomorphic to $k[x_1, \dots, x_n]/\langle L, f\rangle \cong k[x_1, \dots, x_{n-1}]/\langle \bar{f}\rangle,$ we see that also $\bar{f}$ is reduced. 
\end{proof}

\begin{proof}[Proof of Theorem \ref{main}]  By Lemma \ref{extend}, we may assume the ground field is algebraically closed.

We first 
reduce to the case of  two variables.   Given a form  $f$ of degree $d$ in $n$ variables, we can successively mod out a sequence of $n-2$ of independent  linear forms until we have a  form $\bar f$ of degree $d$ in two variables. Now Lemma \ref{dumbbound} 
implies that any lower bound on the $F$-pure threshold of $\bar f$ is also a lower bound for $f$. Likewise, if this lower bound is achieved for 
$f$, then it is also achieved for 
$\bar{f}$.  So because $d$ and $p$ are the same for $f$ and $\bar f$, it suffices to prove the required implication for $\bar f$.
Finally,  if $f$ is reduced,   Lemma \ref{bertini} ensures that $\bar f$ is reduced, by choosing a sufficiently general sequence of linear forms. Thus  the proof of Theorem \ref{main} reduces to the case of  two variables.

We next dispose of the case $d=2$. Any reduced polynomial of degree 2 in two variables factors into two distinct linear factors, so 
without loss of generality,  $f=xy$. By direct computation,  the   $F$-pure threshold is $1$ in every characteristic. So equality holds in (\ref{Lowerbound}) and  $d=p^0+1$.

Now assume $d\geq 3$.  Given a form $f$ of degree $d$ in two variables, we can factor as  $f=xyg$ where $g$ is a form of degree $d-2$. To prove the lower bound (\ref{Lowerbound}), it suffices to show  that 
 for any $p^E$ such that $f^N \in \mathfrak{m}^{[p^E]}$, we have 
\begin{equation}\label{des}
  \frac{N}{p^E} \ge \frac{1}{d-1}.
\end{equation}
But if $f^N \in \mathfrak{m}^{[p^E]} = \langle x^{p^E}, y^{p^E}\rangle,$ then writing 
$$
(xyg)^N  = Ax^{p^E} + B y^{p^E}
$$
for some homogeneous $A$ and $B$, we can use the fact that $\{x, y\}$ is a regular sequence  (Lemma \ref{reg}) to see that 
$$ 
g^N \in \langle x^{p^E-N}, y^{p^E-N}\rangle.
$$
By comparing degrees, it follows that $N(d-2)\ge p^E-N$, which is equivalent to the desired inequality (\ref{des}). This shows the $F$-pure threshold is at least $\frac{1}{d-1}$.

\medskip 

We now investigate what happens when equality holds in (\ref{Lowerbound}).
Assume that $\displaystyle \fpt(f)=\frac{1}{d-1}$. For all $e\geq 1$, we have
$$
 \displaystyle \frac{\lceil \frac{ p^e+1}{d-1} \rceil}{p^e}  \geq \frac{p^e+1}{p^e(d-1)} > \frac{1}{d-1 }=  \fpt(f).
 $$
Since $\fpt(f)$ is the supremum of the set $\{\frac{N}{p^E} \, | \, f^N \not\in \mathfrak m^{[p^E]}\}$, and  $  \frac{\lceil \frac{ p^e+1}{d-1} \rceil}{p^e} $ is
strictly  bigger than this supremum, it must be that 
$$
f^{\lceil \frac{ p^e+1}{d-1} \rceil}\in \mathfrak m^{[p^e]}.
$$ 
To ease notation, we set $K_e = \lceil \frac{ p^e+1}{d-1} \rceil$. We have
$
f^{K_e} \in \langle x^{p^e}, y^{p^e}\rangle,
$
so we write 
\begin{equation}\label{eq5}
f^{K_e}  = A x^{p^e} + B y^{p^e}
\end{equation}
where $A$ and $B$ are forms of degree  $ dK_e-p^e$.

Also using the strict inequality  $\frac{K_e}{p^e} > \frac{1}{d-1}$, we have  $ \frac{K_e}{p^e} - \frac{1}{p^E} > \frac{1}{d-1}$ for all sufficiently large $E$.
So similarly,
$$
f^{p^{E-e}K_e-1}  \in \langle x^{p^E}, y^{p^E}\rangle,  
$$
and we can write 
\begin{equation}\label{eq6}
f^{p^{E-e}K_e-1}  = C x^{p^E} + D y^{p^E}
\end{equation}
for some forms $C$ and $D$.

Now raising (\ref{eq5}) to the power  $p^{E-e}$ and multiplying (\ref{eq6}) by $f$, we have two different expressions for the form $f^{p^{E-e}K_e}$. Subtracting them, we have
\begin{equation}\label{7}
(A^{p^{E-e}} - fC) x^{p^E} + (B^{p^{E-e}} -f  D)  y^{p^E} = 0.
\end{equation}
Again using the fact that $x, y$ is a regular sequence, we conclude that
\begin{equation}\label{eqA}
(A^{p^{E-e}} - fC) \in \langle  y^{p^E} \rangle.
\end{equation}
But we claim that 
\begin{equation}\label{8}
\deg (A^{p^{E-e}} - fC)  = p^{E-e}\deg A  <p^E,
\end{equation}
which implies that  $A^{p^{E-e}} - fC = 0$.

To check  claim (\ref{8}),
recall that the degree of $A$ is
$dK_e-p^e$, so that  (\ref{8}) is equivalent to $dK_e < 2 p^e$.
In turn, we have
$$
dK_e = d \left\lceil{{ \frac{p^e+1}{d-1}   }  }\right\rceil  \,\,
 \leq  \,\, d  \left( \frac{p^e+1}{d-1} + 1 \right),
$$
which is  less than $2p^e$ for all large   $e$.

Having established the veracity of claim (\ref{8}) for $E\gg e\gg  0$, we 
can conclude using the inclusion in (\ref{eqA}) that $A^{p^{E-e}} - fC = 0$, so that
$$
A^{p^{E-e}}  =  fC.
$$
We now invoke the fact that  $f$ is a product of {\it distinct} irreducible polynomials:  the unique factorization property of the polynomial ring implies that  $f$ must divide the form $A$.   Similarly, $f$ divides $B$.

 Now, because $f$ divides both $A$ and $B$, we can divide $f$ out of both sides of   equation (\ref{eq5}) above, to get
\begin{equation}\label{eq9}
f^{K_e-1} \in \langle x^{p^e}, y^{p^e}\rangle.
\end{equation}
 Remembering that $f=xyg$, where $g$ has degree $d-2$, we can again  use the fact that $x, y$ is a regular sequence (Lemma \ref{reg}) to deduce that 
 \begin{equation}\label{eq8}
 g^{K_e-1} \in  \langle x^{p^e-K_e+1}, y^{p^e-K_e+1}\rangle.
 \end{equation}
 Looking at degrees, this says that 
 $$
 (K_e-1)(d-2) \geq p^e-K_e+1,
 $$
which is equivalent to 
$$
(d-1)K_e \ge p^e +  d-1,
$$
or equivalently, 
\begin{equation}\label{inn}
K_e \ge \frac{p^e+d-1}{d-1}=\frac{(p^e+1)+(d-2)}{d-1} = \frac{p^e+1}{d-1}   +  \frac{d-2}{d-1}.
\end{equation}
Remembering that  $K_e = \lceil \frac{ p^e+1}{d-1} \rceil$, we see that
 inequality (\ref{inn})   can 
 hold only  if  $\displaystyle \frac{p^e+1}{d-1}$  is {\bf not} an integer  {\bf and}
$$
\frac{p^e+1}{d-1} \ \ \ \ \mathrm{and}\ \ \ \ \frac{p^e+1}{d-1}   +  \frac{d-2}{d-1}
$$ round up to the same integer. 
This means that $\displaystyle \frac{p^e+1}{d-1}$ must be equal to 
$\displaystyle \left \lfloor \frac{p^e+1}{d-1} \right\rfloor  + \frac{1}{d-1}$; put differently,  the remainder when we divide $p^e+1$ by $d-1$ is 1. 
So     $d-1$ divides $p^e$. In this case,  $d-1$ is a power of $p$ (as desired).
 The proof is complete.
\end{proof}

\section{Extremal Singularities}\label{Equality}

In this section, we complete the proof of Theorem \ref{main1} by characterizing  those reduced homogeneous polynomials for which the lower bound
$$
\fpt(f) \geq \frac{1}{\deg f -1}
$$
is achieved. 
\begin{definition}\label{defExtremal}
A geometrically reduced form $f$ of degree $d>2$  is called an {\bf extremal singularity} if its $F$-pure threshold is equal to $\frac{1}{d-1}$. 
\end{definition}

\begin{remark}
 We exclude the case $d=2$ from Definition {\ref{defExtremal}} because the  $F$-pure threshold of a geometrically reduced quadratic  is one in every characteristic. So not much interesting is happening:
quadratic forms achieve both  the theoretical lower bound and the theoretic upper bound on the $F$-pure threshold (Proposition \ref{easy}) in every case---no finer gradation of singularities exists. 
\end{remark}

The next theorem characterizes extremal singularities in terms of their equations:

\medskip
\begin{theorem}\label{justify}
Let $f$ be a geometrically reduced form of degree $d$ over a field of positive characteristic $p$. Then the  $F$-pure threshold of $f$ is $\frac{1}{d-1}$ if and only if $f$ can be written 
\begin{equation}\label{FF}
x_1^{p^e} L_1+ x_2^{p^e} L_2 + \dots + x_n^{p^e} L_n
\end{equation}
for some $e\geq 0$,  where the $L_i$ are linear forms. 
\end{theorem}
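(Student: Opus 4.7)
The plan is to prove the ``if'' direction directly and then to reduce the ``only if'' direction to an $n$-variable adaptation of the machinery used for two variables in the proof of Theorem \ref{main}. For the easy (``if'') direction, suppose $f = x_1^q L_1 + \cdots + x_n^q L_n$ with $q = p^e$; this is a reduced homogeneous polynomial of degree $q+1$ lying in $\mathfrak{m}^{[q]}$, so $\fpt(f)\leq 1/q$, and Theorem \ref{main} supplies the matching lower bound $\fpt(f)\geq 1/(d-1) = 1/q$. For the converse, assume $\fpt(f)=1/(d-1)$. Theorem \ref{main} gives $d-1 = q = p^{e_0}$, so the whole task is to show $f\in\mathfrak{m}^{[q]}$: once this is established, each monomial of the degree-$(q+1)$ form $f$ is divisible by some $x_i^q$, hence has the shape $x_i^q\cdot x_j$, and grouping by $i$ expresses $f$ in the required form (\ref{FF}).

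To establish $f\in\mathfrak{m}^{[q]}$, I plan to show instead that $f^{p^{e-e_0}}\in\mathfrak{m}^{[p^e]}$ for some $e\geq e_0$: because $\mathfrak{m}^{[p^e]}$ is a monomial ideal and, in characteristic $p$, the $p^{e-e_0}$-th power map on polynomials sends $\sum c_\alpha x^\alpha$ to $\sum c_\alpha^{p^{e-e_0}} x^{p^{e-e_0}\alpha}$, the containment descends to $f\in\mathfrak{m}^{[q]}$. The plan for producing the power relation mirrors in $n$ variables the regular-sequence argument carried out for two variables in Theorem \ref{main}. Set $K_e = p^{e-e_0}+1$; since $K_e/p^e > 1/q = \fpt(f)$, the supremum characterization of $\fpt$ yields $f^{K_e}\in\mathfrak{m}^{[p^e]}$, and for each $E>e$ analogously $f^{p^{E-e}K_e - 1}\in\mathfrak{m}^{[p^E]}$. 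Writing
$$f^{K_e} = \sum_i A_i\, x_i^{p^e},\qquad f^{p^{E-e}K_e - 1}=\sum_i C_i\, x_i^{p^E},$$
raising the first expression to the power $p^{E-e}$ and multiplying the second by $f$ produces two expressions for $f^{p^{E-e}K_e}$; their difference is $\sum_i (A_i^{p^{E-e}} - fC_i)\,x_i^{p^E} = 0$. Because $x_1^{p^E},\ldots,x_n^{p^E}$ is a regular sequence, each $A_i^{p^{E-e}} - fC_i$ lies in $\langle x_j^{p^E}:j\neq i\rangle$. A direct degree count gives $\deg A_i^{p^{E-e}} = p^{E-e}(q+p^{e-e_0}+1)$, which is strictly less than $p^E$ once $e$ is large enough that $p^{e_0-e} + p^{-e_0} + p^{-e} < 1$; since that ideal contains no nonzero element of such low degree, one is forced to $A_i^{p^{E-e}} = fC_i$. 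Reducedness of $f$ together with unique factorization in $k[x_1,\dots,x_n]$ gives $f\mid A_i$ for every $i$, and substituting $A_i = fA_i'$ into the displayed formula produces $f^{K_e - 1} = f^{p^{e-e_0}} \in \mathfrak{m}^{[p^e]}$, as required.

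The main obstacle is the degree and exponent bookkeeping needed to push the two-variable argument through in $n$ variables: one must verify that the regular-sequence cancellation among $x_1^{p^E},\dots,x_n^{p^E}$ and the inequality $\deg A_i^{p^{E-e}} < p^E$ both continue to hold. A related subtlety is that a pure reduction to two variables via Lemmas \ref{dumbbound} and \ref{bertini} is not by itself sufficient: it yields $\overline{f}\in\overline{\mathfrak{m}}^{[q]}$ for generic two-variable quotients $\overline{f}$ of $f$, but this does not lift mechanically to $f\in\mathfrak{m}^{[q]}$, so a genuinely $n$-variable computation with the regular sequence $x_1^{p^E},\dots,x_n^{p^E}$ appears to be required.
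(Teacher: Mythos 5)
Your proof is correct in substance, but it takes a genuinely different route from the paper's. The paper proves this theorem in a few lines by invoking Lemma~\ref{firstbound}, which asserts (with no reducedness hypothesis whatsoever, and for any $e$) that $\fpt(f)\leq 1/p^e$ if and only if $f\in\mathfrak m^{[p^e]}$; that lemma is proved using flatness of Frobenius and a free-module projection argument. You instead bypass Lemma~\ref{firstbound} entirely and redo, in $n$ variables, the regular-sequence cancellation argument from the proof of Theorem~\ref{main}: you produce the power relation $f^{p^{e-e_0}}\in\mathfrak m^{[p^e]}$ directly, using the reducedness of $f$ and unique factorization to extract a common factor of $f$ from the coefficients $A_i$, and then descend to $f\in\mathfrak m^{[q]}$ via an elementary monomial-ideal observation rather than via Kunz's theorem. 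Your route is more self-contained (no flatness of Frobenius) and correctly identifies the key point (that a reduction to two variables via Lemmas~\ref{dumbbound} and~\ref{bertini} would only give $\overline f\in\overline{\mathfrak m}^{[q]}$, which doesn't lift), but it is also less general: it requires $f$ reduced and of degree exactly $q+1$, whereas the paper's Lemma~\ref{firstbound} is a clean ``iff'' with broader applicability.

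One small gap you should close: your degree estimate requires $p^{e_0-e}+p^{-e_0}+p^{-e}<1$ for large $e$, which only converges to $p^{-e_0}<1$ when $e_0\geq 1$. When $e_0=0$ (i.e.\ $d=2$, $q=1$) this fails, but in that case the conclusion $f\in\mathfrak m^{[1]}=\mathfrak m$ is trivially satisfied by any form of positive degree, so a one-line remark disposes of it. You should state that explicitly before launching the degree computation.
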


The proof of Theorem \ref{justify} relies on the following lemma  justifying the intuition that polynomials in  "Frobenius powers" $\mathfrak m^{[p^e]} = \langle x_1^{p^e}, \dots, x_n^{p^e}\rangle$  are "more singular"   than polynomials not in $\mathfrak m^{[p^e]}$:

\medskip

\begin{lemma} \label{firstbound}The $F$-pure threshold of $f$ is less than or equal to  $\frac{1}{p^e}$ if and only if  $f\in \frak m^{[p^e]}$.
In fact, if  $f\not\in \frak m^{[p^e]}$, then  $\fpt(f) \geq \frac{1}{p^e} + \frac{1}{p^{2e}}$. 
\end{lemma}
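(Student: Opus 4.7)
The plan is to prove the quantitative statement first and derive the biconditional as a corollary.  The ``if'' direction of the biconditional is immediate from the definition: if $f \in \mathfrak m^{[p^e]}$, then $\frac{1}{p^e}$ belongs to the set $\{\tfrac{N}{p^E} \,|\, f^N \in \mathfrak m^{[p^E]}\}$ (taking $N=1$, $E=e$), so its infimum $\fpt(f)$ is at most $\frac{1}{p^e}$.  Conversely, once we establish the stronger bound $\fpt(f) \geq \frac{1}{p^e} + \frac{1}{p^{2e}} > \frac{1}{p^e}$ whenever $f \notin \mathfrak m^{[p^e]}$, the assumption $\fpt(f) \leq \frac{1}{p^e}$ immediately forces $f \in \mathfrak m^{[p^e]}$.

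For the quantitative bound, I would try to exhibit a single explicit pair $(N, E) = (p^e+1,\, 2e)$, so that $\frac{p^e+1}{p^{2e}}$ would witness the supremum in the definition of $\fpt(f)$.  The task then reduces to showing $f^{p^e+1} \notin \mathfrak m^{[p^{2e}]}$.  By assumption, there is some monomial $c_\alpha x^\alpha = c_\alpha x_1^{a_1}\cdots x_n^{a_n}$ in $f$ with $c_\alpha \neq 0$ and every $a_i \leq p^e - 1$.  I would single out the monomial $x^{(p^e+1)\alpha}$ in $f^{p^e+1}$: since $(p^e+1)a_i \leq (p^e+1)(p^e-1) = p^{2e}-1 < p^{2e}$ for each $i$, this monomial is not in $\mathfrak m^{[p^{2e}]}$, so the whole task collapses to showing that its coefficient in $f^{p^e+1}$ is nonzero.

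The coefficient computation is the heart of the argument, and this is the step I expect to be the main obstacle.  The Frobenius identity $f^{p^e} = \sum_\gamma c_\gamma^{p^e}\, x^{p^e\gamma}$ (valid in characteristic $p$) gives
$$f^{p^e+1} \;=\; f \cdot f^{p^e} \;=\; \sum_{\beta,\gamma} c_\beta\, c_\gamma^{p^e}\, x^{\beta + p^e\gamma},$$
so the coefficient of $x^{(p^e+1)\alpha}$ is a sum over pairs $(\beta,\gamma)$ of exponents of monomials in $f$ satisfying $\beta + p^e\gamma = (p^e+1)\alpha$; a priori there could be many contributing pairs with cancellation.  The resolution I would pursue uses two constraints to pin down a unique contributor: first, $\beta \geq 0$ forces $\gamma_i \leq \frac{(p^e+1)a_i}{p^e} < a_i + 1$ (using $a_i < p^e$), hence $\gamma_i \leq a_i$ componentwise; second, homogeneity gives $|\gamma| = d = |\alpha|$, which combined with $\gamma_i \leq a_i$ forces $\gamma = \alpha$ and then $\beta = \alpha$.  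The sole contribution is therefore $c_\alpha \cdot c_\alpha^{p^e} = c_\alpha^{p^e+1} \neq 0$, as needed.  The overall argument thus rests on the observation that Frobenius spreads the monomials of $f^{p^e}$ far enough apart that a simple degree count isolates a unique pair contributing to the chosen monomial.
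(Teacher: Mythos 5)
Your proof is correct and takes a genuinely different route from the paper's. Both arguments aim at the same witness, namely showing $f^{p^e+1} \notin \mathfrak m^{[p^{2e}]}$, but they arrive there differently. The paper's proof is module-theoretic: viewing $S = k[x_1,\dots,x_n]$ as a free module over the subring $S^{p^{e'}}$, the hypothesis $f \notin \mathfrak m^{[p^{e'}]}$ supplies an $S^{p^{e'}}$-linear projection $\pi\colon S \to S^{p^{e'}}$ with $\pi(f)=1$; applying $\pi$ shows $f^{p^{e'}+1}=f\cdot f^{p^{e'}}\notin \mathfrak m^{[p^{e+e'}]}$, and the paper also invokes Kunz's flatness of Frobenius to establish $f^{p^{e'}} \notin \mathfrak m^{[p^{e+e'}]}$ in the first place. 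Your version is a direct coefficient computation: you single out a monomial $x^\alpha$ of $f$ lying outside $\mathfrak m^{[p^e]}$ and use the freshman's dream, together with the componentwise bound $\gamma_i \leq a_i$ and the degree constraint, to see that the coefficient of $x^{(p^e+1)\alpha}$ in $f^{p^e+1}$ collapses to the single term $c_\alpha^{p^e+1}\neq 0$, so no cancellation is possible. Your approach is more elementary and self-contained---no appeal to Kunz's theorem or to the free $S^{p^{e'}}$-module structure of $S$---but it does lean on the homogeneity of $f$ to force $\gamma=\alpha$ via $|\gamma|=|\alpha|$, whereas the paper's argument makes no use of homogeneity and proves the lemma for arbitrary polynomials. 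Since every polynomial considered in this paper is a homogeneous form, that implicit hypothesis is harmless here, and your argument is a valid and arguably cleaner alternative.
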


\begin{proof} First assume  $f\in \frak m^{[p^e]}$.  Then $\frac{1}{p^e}$ is in the set $ \left\{\frac{N}{p^E}  \,\,\, \bigg| \,\,\, f^N \in \mathfrak{m}^{[p^E]} \right\}$.
So the $F$-pure threshold, which is the {\it infimum} of this set, is bounded above by  $\frac{1}{p^e}$.

For the converse statement, first observe that 
$S= k[x_1, \dots, x_n]$ can be viewed as a  free module over the subring $S^{p^{e'}}$ for all $e'$. Indeed, we can take $\{\lambda x^m\}$ as a basis, where $\lambda$ ranges over a basis for $k$ over $k^{p^{e'}}$ and $x^{m}$ ranges through all monomials in the $x_i$. 
Moreover,  if 
$f\notin \mathfrak m^{[p^{e'}]}$, then it can be taken to be a part of a free basis   for $S$ over $S^{p^{e'}}$---for example, taking any monomial $x^{m'}$  that appears in $f$ with all exponents less than $p^{e'}$, we can replace $x^{m'}$ in the basis  $\{\lambda x^m\}$ by $f$ to get another basis. 
In particular,  projection onto the  $S^{p^{e'}}$-submodule generated by $f$ gives us an  $S^{p^{e'}}$-linear map $\pi:S\rightarrow S^{p^{e'}}$ sending $f$ to 1.   

Now, assume  $f\notin \mathfrak m^{[p^e]}$.   Then for all $e'$, the flatness of Frobenius \cite{Kunz} implies that also $f^{p^{e'}}\notin \mathfrak m^{[p^{e+e'}]}$. Furthermore, for all $e'\geq e$, we have
$$
f^{p^{e'}+1} =  f \cdot f^{p^{e'}}\notin \mathfrak m^{[p^{e+e'}]},
 $$ for otherwise, we could apply $\pi$ from the previous paragraph to see that $ f^{p^{e'}}\in \mathfrak m^{[p^{e+e'}]}$.
 This means that the rational number $\frac{p^{e'}+1}{p^{e+e'} }$ is in the set $  \left\{\frac{N}{p^e}  \,\,\, \bigg| \,\,\, f^N \notin \mathfrak{m}^{[p^e]} \right\}$ for all $e'\geq e$.
 So the supremum of this set is at least $\frac{1}{p^e} + \frac{1}{p^{e+e'}}$ for all $e'\geq e$. The largest of these is when $e=e'$, so the supremum is at least  $\frac{1}{p^e} + \frac{1}{p^{2e}}$. This completes the proof. 
\end{proof}

\begin{proof} [Proof of Theorem \ref{justify}]
Both directions follow  from facts we have already established. 

First, we claim that the  form $f = x_1^{p^e} L_1+ x_2^{p^e} L_2 + \dots + x_n^{p^e} L_n$  has $F$-pure threshold $\frac{1}{d-1} = \frac{1}{p^e}$. Indeed, $\fpt(f) \geq \frac{1}{p^e}$ by Theorem \ref{main},  whereas $\fpt(f) \leq \frac{1}{p^e}$ by Lemma \ref{firstbound}.

\medskip
For the other direction, suppose that  $\fpt(f) = \frac{1}{d-1}$ for some  reduced form $f$. Now Theorem \ref{main} tells us  that $d=p^e+1$ for some $e$, which means that   $\fpt(f) = \frac{1}{d-1} = \frac{1}{p^e}$ for some $e$.  Now Lemma \ref{firstbound} guarantees that $f\in \frak m^{[p^e]}$. Thinking about degrees, we see that $f$ must be of the form (\ref{FF}). 
\end{proof}


\section{Matrix Representation of Frobenius Forms}\label{LinearAlgebra}

Our next goal is to study the forms which achieve the minimal possible F-pure threshold according to the bounds proved in the previous sections. 
 Theorem \ref{justify}  tells us that such polynomials  have the  special form (\ref{FF}), which warrants a name: 
\medskip

\begin{definition} 
A {\bf Frobenius form} is a form of  degree $q+1$ in the ideal $\mathfrak m^{[q]} = \langle x_1^q, \dots, x_n^q\rangle$, where $q$ is a positive power of the characteristic, $p$. Equivalently, a Frobenius form is a polynomial that can be written 
\begin{equation}\label{FF2}
 \sum x_i^{p^e} L_i,
 \end{equation}
where the $L_i$ are linear forms and $e>0$. 
\end{definition}

\bigskip
The formula  (\ref{FF2}) implies that a Frobenius form can be uniquely  factored as
\begin{equation}\label{matrix}
 h = \begin{bmatrix}   x_1^{p^e} &  x_2^{p^e}  & \hdots & x_n^{p^e}\end{bmatrix}
 A \begin{bmatrix}  x_1 \\ x_2 \\ \vdots  \\ x_n \end{bmatrix},
 \end{equation}
  where $A$ is the  $n\times n$ matrix whose $i$-th row is made up of the coefficients of the linear form $L_i$.  This allows us to use linear algebra to 
 conveniently study Frobenius forms. 
  \bigskip

Consider how changing coordinates affects the matrix representing a Frobenius form. 
 For a matrix $B$ of any size,  we denote by $B^{[p^e]}$ the matrix obtained by raising all entries to the $p^e$-th power. If $g$ is a change of coordinates represented by an invertible $n\times n$ matrix,  then 
 $$g \cdot  \begin{bmatrix}   x_1^{p^e}  \\  x_2^{p^e}   \\  \vdots   \\ x_n^{p^e} \end{bmatrix} = g^{[p^e]} \begin{bmatrix}  x_1^{p^e}  \\  x_2^{p^e}   \\ \vdots \\ x_n^{p^e} \end{bmatrix} = [g 
   \begin{bmatrix}  x_1 \\ x_2 \\ \vdots \\ x_n \end{bmatrix} ]^{[p^e]}.$$
  Here the notation $\cdot$ indicates the ring automorphism induced by the linear change of coordinates, and all other adjacent symbols are usual matrix product.

  So our change of  coordinates formula  for $g$ acting on $h$ is 
  $$
g \cdot \begin{bmatrix}  x_1^{p^e}  &  x_2^{p^e}   &  \hdots & x_n^{p^e} \end{bmatrix}
  A   \begin{bmatrix}  x_1 \\ x_2 \\ \vdots \\ x_n \end{bmatrix} = 
 \begin{bmatrix}  x_1^{p^e}  &  x_2^{p^e}   & \hdots & x_n^{p^e} \end{bmatrix}
\left[g^{[p^e]}\right]^{tr}  A g  \begin{bmatrix}  x_1 \\ x_2 \\ \vdots \\ x_n \end{bmatrix},
 $$
 where the superscript "$tr$" indicates the transpose. 
 We can write this in the compact form 
\begin{equation*}
g \cdot [( \vec{x}^{[p^e]})^{tr} A \,  \vec{x}]  = 
 (\vec{x}^{[p^e]})^{tr}  \left[g^{[p^e]}\right]^{tr}  A g \, \vec{x}.
\end{equation*}
 That is, if $h$ is a Frobenius form represented by the matrix $A$, then the Frobenius form $g\cdot h$, where $g$ is any linear change of coordinates, is represented by the matrix 
    $ \left[g^{[p^e]}\right]^{tr}  A g$. This action is {\it different} than some others that may be familiar to the reader; see 
    Remark \ref{HWCompare}.

   \medskip
 
 It is worth recording, for future reference,  how each elementary coordinate operation  affects  the matrix $A$ representing  a Frobenius form.
  \begin{lemma}\label{rowops}
 Let 
 \begin{equation}\label{matrix2}
 h = \begin{bmatrix}   x_1^{p^e} &  x_2^{p^e}  & \hdots & x_n^{p^e}\end{bmatrix}
 A \begin{bmatrix}  x_1 \\ x_2 \\ \vdots  \\ x_n \end{bmatrix},
 \end{equation}
 be a Frobenius form.  Then  elementary  linear changes of coordinates are reflected in $A$ as follows: 
\begin{itemize}
\item Swapping  two variables   ($x_i\leftrightarrow  x_j$),  fixing the others changes $A$ by swapping columns $C_i$ and $C_j$  {\bf and} rows $R_i$ and $R_j$,   fixing the others.
\item Multiplying coordinate $x_i$  by a non-zero scalar  $\lambda$ ($x_i\mapsto \lambda x_i$), fixing the others  changes $A$ by multiplying row $R_i$ by $\lambda^{p^e} $  {\bf and}  column $C_i$ by $\lambda$. 
\item Replacing $x_i$ by $x_i + \lambda x_j$ for some $j\neq i$, fixing the others changes $A$ by replacing column $C_j$ by column $C_j+\lambda C_i$   {\bf and}  row $R_j$ by row $R_j+\lambda^{p^e} R_i$.
\end{itemize}
 \end{lemma}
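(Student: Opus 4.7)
The plan is to apply the change-of-coordinates formula $g \cdot h \leftrightarrow \bigl[g^{[p^e]}\bigr]^{tr} A\, g$ that is derived in the paragraphs immediately preceding the lemma, taking $g$ to be the elementary invertible matrix realizing each of the three listed coordinate operations. Each item then reduces to a routine matrix calculation, the only care being to keep track of how the Frobenius twist $(-)^{[p^e]}$ interacts with transposition. Note that because the two vectors flanking $A$ in (\ref{matrix2}) are asymmetric---the left one Frobenius-twisted, the right one not---the row side of $A$ picks up a $p^e$-th power while the column side does not.

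For the swap $x_i \leftrightarrow x_j$ (fixing the others), $g$ is the transposition permutation matrix $P_{ij}$. Since $P_{ij}$ has only $0/1$ entries, $P_{ij}^{[p^e]} = P_{ij}$, and $P_{ij}^{tr} = P_{ij}$, so the new matrix is $P_{ij} A\, P_{ij}$; right-multiplication swaps $C_i$ and $C_j$, and left-multiplication swaps $R_i$ and $R_j$. For the scaling $x_i \mapsto \lambda x_i$, $g$ is the diagonal matrix $D$ with $D_{ii} = \lambda$ and $D_{kk} = 1$ otherwise. Then $D^{[p^e]}$ is diagonal with $\lambda^{p^e}$ in slot $(i,i)$, transposition is trivial, and the product $D^{[p^e]} A\, D$ scales $R_i$ by $\lambda^{p^e}$ and $C_i$ by $\lambda$, as claimed. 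For the shear $x_i \mapsto x_i + \lambda x_j$, take $g = I + \lambda E_{ij}$, where $E_{ij}$ has a single $1$ in position $(i,j)$. Then $g^{[p^e]} = I + \lambda^{p^e} E_{ij}$ and $\bigl[g^{[p^e]}\bigr]^{tr} = I + \lambda^{p^e} E_{ji}$. Right-multiplication of $A$ by $g$ is the elementary column operation $C_j \leftarrow C_j + \lambda C_i$, while left-multiplication by $I + \lambda^{p^e} E_{ji}$ is the elementary row operation $R_j \leftarrow R_j + \lambda^{p^e} R_i$.

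There is essentially no obstacle; the one potential source of confusion is the indexing in the third case, where the coordinate operation is labeled by the variable $x_i$ being modified but the rows and columns of $A$ that actually change are both indexed by $j$. This is forced by the identity $E_{ij}^{tr} = E_{ji}$ together with the convention that the $i$-th entry of $g \vec{x}$ is $x_i + \lambda x_j$: the shear column $j$ of $g$ governs the right multiplication, and the transposed shear row $j$ of $\bigl[g^{[p^e]}\bigr]^{tr}$ governs the left multiplication.
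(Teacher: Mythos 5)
Your proposal is correct, and it is exactly the argument the paper intends: the paper derives the change-of-coordinates formula $\tilde A = \bigl[g^{[p^e]}\bigr]^{tr} A\, g$ in the paragraph preceding the lemma and leaves its specialization to the three elementary matrices as a routine check, which is precisely what you carried out. Your bookkeeping of the Frobenius twist and the index swap in the shear case (rows and columns indexed by $j$, not $i$) is accurate.
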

 
 \medskip
 \subsection{Embedding dimension,  rank and the singular locus}
 
 A form $f \in k[x_1, \dots, x_n]$ is {\bf non-degenerate} if it can't be written as a polynomial in fewer variables after any linear change of coordinates.
 In this case,  the singularity defined by $f$ has {\bf embedding dimension} $n$, meaning that the dimension of its Zariski cotangent space $\mathfrak m/\mathfrak m^2$ is $n$.    
  
  \medskip
  
The {\bf rank} of a Frobenius form is the rank of the representing matrix. The following proposition implies that the rank is the same as the co-dimension of the singular locus of the corresponding extremal singularity.

\medskip

\begin{proposition}\label{sing}
The singular locus of an extremal singularity defined by the Frobenius form $$
\begin{bmatrix} x_1^{p^e} & x_2^{p^e} & \cdots  & x_n^{p^e} \end{bmatrix} 
A  \begin{bmatrix} x_1\\ x_2 \\  \vdots  \\ x_n \end{bmatrix}
$$
is the $p^e$-fold  linear subvariety defined by the  equations
$$
 A^{tr}  \begin{bmatrix}
 x_1^{p^e} \\
 x_2^{p^e}\\
 \vdots \\
 x_n^{p^e}\\
 \end{bmatrix} = 0.$$
Put differentially,  the (reduced)  singular set is the linear space defined as the kernel of the matrix $ (A^{[1/p^e]})^{tr}$, where  $(A^{[1/p^e]}
)^{tr}$
is the transpose of the matrix whose entries are the $p^e$-th roots of the entries of $A$.
\end{proposition}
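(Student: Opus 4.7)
The plan is to compute the Jacobian ideal of $h$ explicitly, and then to recognize that the defining equations of the singular scheme are $p^e$-th powers of linear forms. Writing $h = \sum_{i,j} a_{ij}\, x_i^{p^e} x_j$ and differentiating, we use that $e \ge 1$ (since we are in the extremal case $d = p^e + 1 > 2$), so $\partial(x_k^{p^e})/\partial x_k = p^e x_k^{p^e-1} = 0$ in characteristic $p$. Hence the product rule produces only the term where $\partial/\partial x_k$ hits the factor $x_j$, giving
$$
\frac{\partial h}{\partial x_k} \;=\; \sum_{i} a_{ik}\, x_i^{p^e},
$$
which is exactly the $k$-th entry of the column vector $A^{tr}\vec{x}^{[p^e]}$.

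Next, I would argue that the defining ideal of the singular locus of $V(h)$ coincides with the ideal generated by these partial derivatives alone. A priori the singular scheme is cut out by $(h,\ \partial h/\partial x_1,\ldots,\ \partial h/\partial x_n)$, but Euler's identity gives
$$
(p^e+1)\, h \;=\; \sum_k x_k\, \frac{\partial h}{\partial x_k},
$$
and since $\deg h = p^e+1$ is a unit in characteristic $p$, the form $h$ already lies in the ideal of partials. Therefore the singular scheme equals $V\bigl(A^{tr}\vec{x}^{[p^e]}\bigr)$, which proves the first displayed equation in the statement.

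Finally, to obtain the description of the reduced singular set, I would pass to $p^e$-th roots, which is legitimate because the ground field is algebraically closed and hence perfect. Let $A^{[1/p^e]}$ denote the matrix whose entries are the (unique) $p^e$-th roots of the entries of $A$. The Frobenius identity in characteristic $p$ gives
$$
\Bigl(\textstyle\sum_i a_{ik}^{1/p^e}\, x_i\Bigr)^{p^e} \;=\; \sum_i a_{ik}\, x_i^{p^e},
$$
so each generator of the Jacobian ideal is literally the $p^e$-th power of a linear form. Thus the Jacobian ideal equals the $p^e$-th Frobenius power of the linear ideal generated by the entries of $[A^{[1/p^e]}]^{tr}\vec{x}$, exhibiting the singular scheme as a $p^e$-fold thickening of the linear subspace $\ker\bigl([A^{[1/p^e]}]^{tr}\bigr)$. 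I do not foresee a real obstacle; the only point requiring care is invoking Euler's identity to remove $h$ from the Jacobian ideal, which is why it is essential that $p \nmid p^e + 1$.
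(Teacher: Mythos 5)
Your proof is correct and follows essentially the same route as the paper: differentiate $h$, observe that the $x_k^{p^e}$ factor contributes nothing because $p^e \equiv 0 \pmod p$, and identify the resulting column of partials with $A^{tr}\vec{x}^{[p^e]}$, each entry being a $p^e$-th power of a linear form since the field is perfect. The only difference is that you make explicit (via Euler's identity in degree $p^e+1$, a unit mod $p$) why $h$ itself is redundant in the Jacobian ideal, a point the paper's proof leaves tacit when it asserts the singular locus is cut out by the partials alone.
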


\begin{proof}
Write 
$$h = x_1^{p^e}L_1 + x_2^{p^e} L_2 + \dots + x_n^{p^e}L_n$$
where the coefficients of the  linear forms $L_i$ are given by the rows of $A = [a_{ij}]$. 
The singular locus is defined by the vanishing of the partial derivatives $\frac{\partial h}{\partial x_j}. $
But  for each  $j$, 
 $$\frac{\partial h}{\partial x_j} = x_1^{p^e} a_{1j} + \dots + x_n^{p^e}a_{nj} =
 \begin{bmatrix}
 a_{1j} & \cdots & a_{nj} 
 \end{bmatrix}
  \begin{bmatrix}
 x_1^{p^e} \\ \vdots \\  x_n^{p^e}
 \end{bmatrix} =  \left[
  \begin{bmatrix}
 a_{1j}^{\frac{1}{p^e}} & \cdots & a_{nj}^{\frac{1}{p^e}} 
 \end{bmatrix}
  \begin{bmatrix}
 x_1 \\ \vdots \\  x_n
 \end{bmatrix} \right]^{p^e}, $$ so the proposition follows. 
 \end{proof}

\medskip

Finally, we record a simple lemma which gives a nice form for a Frobenius form  in terms of its rank and the minimal number of variables in which it can be written.
\begin{lemma}\label{Janet}
A   Frobenius form of  rank $r$ can be written, in suitable coordinates, as 
$$
h = x_1^{p^e}L_1 + x_2^{p^e} L_2 + \dots + x_r^{p^e}L_r,
$$
where the $L_i$ are linearly independent linear forms. 
In this case,  if $h$ is non-degenerate, then its embedding dimension $n$ is
equal to the dimension of the space spanned by the forms $x_1, x_2, \dots, x_r, L_1, \dots, L_r$.  In particular,  $2r\geq n$. 
\end{lemma}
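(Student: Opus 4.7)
The plan is two-fold: use Lemma \ref{rowops} to put the coefficient matrix of $h$ into a normal form with only $r$ nonzero rows, then read off the embedding dimension by computing the translation symmetries of the resulting polynomial.

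For the matrix normalization, first replace $k$ by its algebraic closure (this preserves the rank, and if the desired normal form is achievable over any extension it is achievable over $\bar{k}$). Write $h = (\vec{x}^{[p^e]})^{tr} A \vec{x}$ with $\operatorname{rank}(A) = r$. A simultaneous permutation of rows and columns—induced by permuting the variables, as recorded in Lemma \ref{rowops}—brings a linearly independent set of $r$ rows $R_1, \dots, R_r$ to the top. Each remaining row is a combination $R_i = \sum_{j=1}^r c_{ij} R_j$ for $i > r$. Choose $\lambda_{ij} \in k$ with $\lambda_{ij}^{p^e} = -c_{ij}$ (available over the algebraic closure), and apply the coordinate changes $x_j \mapsto x_j + \lambda_{ij} x_i$ for $j = 1, \dots, r$; by Lemma \ref{rowops} each such substitution alters only row $R_i$ and column $C_i$, and their cumulative effect is to replace $R_i$ by $R_i + \sum_j \lambda_{ij}^{p^e} R_j = 0$. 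Running the procedure independently for each $i > r$ zeros out the bottom $n-r$ rows, and the top $r$ rows remain linearly independent since column operations preserve the rank of a row set. The form now reads $h = x_1^{p^e} L_1 + \cdots + x_r^{p^e} L_r$ with $L_1, \dots, L_r$ linearly independent.

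For the embedding dimension, use the characterization that it equals $n - \dim T$, where $T \subseteq k^n$ is the largest subspace such that $h(x + tv) = h(x)$ identically in $x$ and $t$ for every $v \in T$. Using the characteristic-$p$ identity $(x_i + tv_i)^{p^e} = x_i^{p^e} + t^{p^e} v_i^{p^e}$, expand
$$
h(x+tv) - h(x) \;=\; t \sum_{i=1}^r x_i^{p^e} L_i(v) \;+\; t^{p^e} \sum_{i=1}^r v_i^{p^e} L_i \;+\; t^{p^e+1} \sum_{i=1}^r v_i^{p^e} L_i(v).
$$
The coefficient of $t$ forces $L_i(v) = 0$ for all $i \le r$, and the coefficient of $t^{p^e}$ forces $\sum_i v_i^{p^e} L_i = 0$, whence $v_1 = \cdots = v_r = 0$ by the linear independence of the $L_i$. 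Consequently $T$ is exactly the annihilator of $W := \operatorname{span}(x_1, \dots, x_r, L_1, \dots, L_r)$, the embedding dimension of $h$ equals $\dim W$, and since $W$ is spanned by $2r$ linear forms one concludes $n \leq 2r$.

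The point that I expect to require the most care is the characterization ``embedding dimension $= n - \dim T$'' in characteristic $p$: one must verify that a polynomial invariant under all translations in a given coordinate direction is independent of that coordinate, then iterate over a basis adapted to $T$. This is routine but worth checking explicitly so as not to be tripped up by Frobenius phenomena.
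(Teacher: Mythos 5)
Your proof is correct. For the normalization $h = \sum_{i\le r} x_i^{p^e}L_i$ it matches the paper's argument (simultaneous row/column operations from Lemma~\ref{rowops} zero out the dependent rows), except that you make explicit the need for the $p^e$-th roots $\lambda_{ij}$ of the dependency coefficients, which the paper elides by saying only that ``a suitable sequence of row operations can be used.'' Passing to $\bar k$ for this is harmless in the paper's applications. Where you genuinely diverge is the embedding-dimension claim. The paper's proof is a two-line contrapositive: if $W:=\operatorname{span}(x_1,\dots,x_r,L_1,\dots,L_r)$ had dimension less than $n$, then $h$ would be a polynomial in fewer than $n$ independent linear forms, contradicting that $n$ is the embedding dimension; since $W$ sits in an $n$-dimensional space of linear forms, $\dim W = n$. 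You instead compute the translation-invariance subspace $T$ directly from the Frobenius identity $(x_i+tv_i)^{p^e}=x_i^{p^e}+t^{p^e}v_i^{p^e}$, deduce $T = W^\perp$, and conclude via $\text{emb dim} = n - \dim T$. This buys a coordinate-free characterization of $W$ as the annihilator of the translation symmetries, establishes both inequalities at once, and correctly isolates the char-$p$ subtlety — translation invariance, not vanishing of partials, is the right notion. That caveat does check out: $h(y+v)=h(y)$ as a polynomial identity forces independence of the $v$-direction (in coordinates adapted to $v$, substitute $t=-y_1$ in $h(y_1+t,\dots)=h(y_1,\dots)$), and the converse inclusion $\dim T\ge n-m$ when $h$ is a polynomial in $m$ variables is immediate. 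So your argument is complete and arguably gives a cleaner reason for the equality than the paper's.
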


\begin{proof} Let $h$ be a non-degenerate Frobenius form in  $n$ variables, and let $A$ denote its matrix.  Swapping variables, assume the first $r$ rows of $A$ are linearly independent. Because the rows beyond the $r$-th are all dependent on the first $r$, a suitable sequence of row operations can be used to transform these bottom  rows into zero rows; the corresponding column operations (Lemma \ref{rowops}) do not affect these zero rows. Thus without loss of generality, we can assume  the bottom $n-r$ rows of $A$ 
 are zero rows. This implies that $h$ can be written as $x_1^{p^e}L_1 + x_2^{p^e} L_2 + \dots + x_r^{p^e}L_r$ for some linear forms $L_i$.
 The $L_i$ are linearly independent because their coefficient vectors span the row space of the matrix $A$, which has rank $r$.
 For the final statement, note that if  $x_1, x_2, \dots, x_r, L_1, \dots, L_r$ span a space of dimension less than $n$, then  $h$ can be written in fewer than $n$ variables, so it is degenerate. \end{proof}

  \begin{remark}\label{Hermitian}
  A very special kind of Frobenius form is a {\it Hermitian form} of characteristic $p$. These are Frobenius forms in which the matrix $A$ representing the form   satisfies $a_{ij}=a_{ji}^q$
  where $q$ is a power of $p$ for all $i,  j$.  In particular, since this implies $a_{ij}^{q^2} = a_{ij}$ for all $i, j$,  a Hermitian form is defined over the finite field $\mathbb F_{q^2}$ and  the Frobenius map ($q$-th power map) is an involution that  plays a role analogous to complex conjugation.    
   Hermitian hypersurfaces---projective hypersurfaces defined by Hermitian forms---have well-studied "extremal" properties, such as an abundance of rational points; see  \cite{BC},  \cite{segre},  and \cite{HommaKim}.
        \end{remark}

\section{Extremal Singularities of Full Rank}\label{FullRank}
In this section, we prove the following characterization of isolated extremal singularities.

 \smallskip

\begin{theorem}\label{fullrank}   Every  full rank   extremal  singularity over an algebraically closed field $k$ of characteristic $p>0$ 
 is represented, in suitable linear coordinates,  by the   diagonal form  $x_1^{q+1}+ \cdots + x_n^{q+1},$ where 
$q$ is some power of $p$. 
\end{theorem}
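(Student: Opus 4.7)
The plan is to recast Theorem \ref{fullrank} as a matrix-theoretic statement and induct on $n$. Writing $h = \vec{x}^{[q]t} A \vec{x}$, full rank means $A \in \operatorname{GL}_n(k)$, and the goal is to find $g \in \operatorname{GL}_n(k)$ with $(g^{[q]})^t A g = I$, since this is the matrix representing $x_1^{q+1} + \cdots + x_n^{q+1}$. The base case $n = 1$ is a one-line rescaling: choose $g \in k^*$ so that $a g^{q+1} = 1$, which is possible since $k$ is algebraically closed.

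For the inductive step, I would seek a vector $v_0 \in k^n$ with $h(v_0) \neq 0$ that permits an orthogonal decomposition $h = y_1^{q+1} + h'(y_2, \ldots, y_n)$, with $h'$ a Frobenius form of rank $n-1$ to which induction applies. Writing $\beta(u, v) := u^{[q]t} A v$, the decomposition requires the two linear hyperplanes $W_1 := \{w : \beta(v_0, w) = 0\}$ and $W_2 := \{w : \beta(w, v_0) = 0\}$ to coincide. Here $W_1$ has normal $A^t v_0^{[q]}$, while $W_2$ has normal $(A v_0)^{[1/q]}$, noting that $\beta(w, v_0) = \sum_i (A v_0)_i w_i^q$ is the $q$-th power of a linear form in $w$, so the $q$-th roots of its coefficients define the defining hyperplane. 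Once $W_1 = W_2 =: W$, the decomposition $k^n = k v_0 \oplus W$ (with $v_0$ scaled to $h(v_0) = 1$) produces the required block form, and $h|_W$ is a Frobenius form of rank $n-1$.

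The coincidence condition on $v_0$ says that the two normal vectors are parallel; after raising to the $q$-th power this reads $A v_0 \propto (A^{[q]})^t v_0^{[q^2]}$, i.e., $[v_0]$ is a fixed point of the self-morphism $\phi \colon \mathbb{P}^{n-1} \to \mathbb{P}^{n-1}$ defined by $[v] \mapsto [A^{-1} (A^{[q]})^t v^{[q^2]}]$. Since $A$ is invertible, $\phi$ is a genuine morphism of $\mathbb{P}^{n-1}$ given by homogeneous polynomials of degree $q^2$; by the Lefschetz trace formula, its fixed-point scheme has positive formal degree $1 + q^2 + \cdots + q^{2(n-1)}$ and is therefore non-empty.

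The main obstacle is to guarantee a fixed point $v_0$ with $h(v_0) \neq 0$, so that the scaling $h(v_0) = 1$ is available. A direct calculation from $A v_0 = \lambda^{-1} (A^{[q]})^t v_0^{[q^2]}$ gives $h(v_0)^q = \lambda \cdot h(v_0)$, so any fixed point with multiplier $\lambda$ satisfies either $h(v_0) = 0$ or $h(v_0)^{q-1} = \lambda$. The plan is to conclude, via a degree comparison between the fixed-point scheme of $\phi$ and the hypersurface $\{h = 0\} \subset \mathbb{P}^{n-1}$ (which has degree $q+1$), together with the non-degeneracy coming from $A \in \operatorname{GL}_n$, that not every fixed point can lie on $\{h = 0\}$. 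This produces the desired $v_0$ and closes the induction.
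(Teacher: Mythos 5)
Your strategy runs parallel to the paper's: both arguments reduce to finding a vector $v_0$ satisfying a Frobenius-twisted fixed-point condition, both use the resulting decomposition $k^n = kv_0 \oplus W$ to set up the induction, and the relation $h(v_0)^q = \lambda\, h(v_0)$ you compute is, as we will see, exactly the identity that powers the paper's proof. The gap is in your final step.

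A ``degree comparison between the fixed-point scheme of $\phi$ and the hypersurface $\{h=0\}$'' cannot by itself show that some fixed point lies off the hypersurface. A hypersurface of degree $q+1$ in $\mathbb P^{n-1}$ can contain a zero-dimensional scheme of arbitrarily large degree --- indeed, any finite collection of points on it --- so the inequality $1 + q^2 + \cdots + q^{2(n-1)} > q+1$ proves nothing. You would also need to rule out that $\operatorname{Fix}(\phi)$ has positive-dimensional components lying on $\{h=0\}$, which is not addressed. Invoking ``the non-degeneracy of $A$'' is exactly the missing content, and it is not clear what the argument would be.

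The repair, and this is what the paper does, is to stop treating the multiplier $\lambda$ as a free parameter and instead substitute $\lambda = h(v_0)^{q-1}$ directly into the fixed-point equation. Define the vector of forms
$$F(v_0) \;=\; h(v_0)^{q-1}\,Av_0 \;-\; \bigl(A^{[q]}\bigr)^{\mathrm{tr}} v_0^{[q^2]} \;\in\; k[Y_{1},\dots,Y_n]^n,$$
a system of $n$ homogeneous polynomials of degree $q^2$ in the coordinates of $v_0$. Two things are now automatic. First, any nonzero solution of $F=0$ has $h(v_0)\ne 0$: if $h(v_0)=0$, then $F=0$ collapses to $(A^{[q]})^{\mathrm{tr}} v_0^{[q^2]}=0$, and invertibility of $A$ forces $v_0 = 0$. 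Second, your identity $h(v_0)^q = \lambda h(v_0)$ becomes the polynomial syzygy
$$\sum_{j} Y_j^{\,q}\,F_j \;=\; h(v_0)^{q} - h(v_0)^{q} \;=\; 0,$$
valid in the polynomial ring, which shows that $F_1,\dots,F_n$ do not form a regular sequence (a degree comparison now does work: $Y_n^{\,q}$, of degree $q$, cannot lie in the ideal generated by the $F_j$, of degree $q^2$). Hence the ideal $\langle F_1,\dots,F_n\rangle$ has height less than $n$, and by the Nullstellensatz there is a nonzero common zero $v_0$ over the algebraically closed field $k$. For this $v_0$, $h(v_0)\ne 0$ holds automatically by the first point, and the decomposition you describe goes through (the paper phrases this decomposition slightly differently, via its Lemma \ref{adela} together with a second elementary change of coordinates, but that is cosmetic). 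In short: you found the right relation, but framed the existence step as a geometric count that does not close; make the multiplier explicit so the relation becomes a syzygy, and the existence follows from Krull's height theorem and the Nullstellensatz.
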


We prove Theorem \ref{fullrank} using  only  Hilbert's Nullstellensatz.
In the special case of a {\it Hermitian form}---that is, where the matrix satisfies $a_{ij} = a_{ji}^p$ for all $i, j$---Theorem \ref{fullrank} can be found\footnote{There appears to be some confusion in the literature interpreting the 1936 paper of Hasse and Witt
(see, e.g., the "warning and request" in [AH19]).}
 in 
\cite{HesseWitt} or  \cite{BC}.  When the embedding dimension is  at least four, Theorem \ref{fullrank}  follows from the main theorem of 
 \cite{beauville}.   Koll\'ar suggested an alternate proof as well; see Remark \ref{JK}.

\begin{proof}
We will prove this by induction on $n$. The case where $n=1$ is trivial. 

Let $h$ be a full rank Frobenius form  in $n$ variables with $n\geq 2$. 
Write $h$ as $$
h  = \begin{bmatrix} x_1^{p^e} & x_2^{p^e} & \cdots  & x_n^{p^e} \end{bmatrix} 
A  \begin{bmatrix} x_1\\ x_2 \\ \vdots \\ x_n \end{bmatrix}
$$
where $A$ is an $n\times n$ matrix over $k$.
The induction happens by showing that we can change  coordinates to put $A$ into the block form
\begin{equation}\label{form}
\begin{bmatrix}

* & * & \cdots  & * & 0 \\
*& *  & \cdots  &* & 0 \\
\vdots  & \vdots  & \ddots & \vdots  & \vdots  \\
*  & *  & \cdots  &* & 0 \\
0 & 0  & \cdots & 0  & 1\\
\end{bmatrix}
\end{equation}
Equivalently, this says we can write the Frobenius form as 
$$h = x_n^{p^e+1} + g(x_1, \dots, x_{n-1}),$$
where $g$ is  full rank Frobenius form  in the first $n-1$ variables. 
So if we know by induction that $g$ can be put into the desired form by a linear change of coordinates involving only the variables $x_1, \dots, x_{n-1}$, then it follows that $h$ is in  this form as well.

\medskip
We will use the following lemma:

\begin{lemma}\label{adela}
If a  full rank Frobenius form 
\begin{equation}\label{niceform}
h  = \begin{bmatrix} x_1^{p^e} & x_2^{p^e} & \cdots  & x_{n-1}^{p^e} & x_n^{p^e} \end{bmatrix} 
\begin{bmatrix}
* & * & \cdots & * & a_{1n}\\
* & * &  \cdots & * &   a_{2n} \\
\vdots & \vdots & \ddots & \vdots &\vdots \\
* &  * &  \cdots & * & a_{n-1,n} \\
a_{n1} & a_{n2}& \cdots & a_{n,n-1} & a_{nn} \\
\end{bmatrix}
 \begin{bmatrix} x_1\\ x_2 \\ \vdots \\ x_{n-1}   \\ x_n \end{bmatrix}
\end{equation}
satisfies 
 \begin{equation}\label{require}a_{in}a_{nn}^{p^e-1} = a_{ni}^{p^e} \ \ \ \mathrm{ for  \,\, all\  } i=1, \dots, n-1,\end{equation}
 then we can change coordinates to put $h$ in  the block form (\ref{form}).  That is, 
we can change coordinates to get $h$ in the form 
$$
x_n^{p^e+1} +  g
$$ where $g$ is a Frobenius form  in  $x_1, x_2, \dots x_{n-1}$. \end{lemma}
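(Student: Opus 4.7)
The plan is to carry out a characteristic-$p$ analog of ``completing the square'': choose a linear change of the variable $x_n$ that absorbs the mixed cross-terms $x_i^q x_n$ and $x_n^q x_j$ into a clean $x_n^{q+1}$ piece, leaving a Frobenius form purely in $x_1,\dots,x_{n-1}$ (where $q=p^e$). The hypothesis (\ref{require}) will turn out to be exactly the compatibility needed for this to succeed, and the Frobenius identity $(u+v)^{q}=u^{q}+v^{q}$ in characteristic $p$ is what makes the ``asymmetric square'' with exponents $(q,1)$ collapse nicely.

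First I would observe that $a_{nn}\neq 0$: if $a_{nn}=0$ then (\ref{require}) forces $a_{ni}^q=0$, hence $a_{ni}=0$ for every $i<n$, so the $n$-th row of $A$ vanishes, contradicting full rank. Since $k$ is algebraically closed I can solve $\lambda^{q+1}=a_{nn}^{-1}$, and rescaling $x_n\mapsto \lambda x_n$ replaces $a_{nn}$ by $1$ (by Lemma \ref{rowops}) while preserving (\ref{require}), which now reads $a_{in}=a_{ni}^q$ for $i<n$. I then introduce the new variable $y_n := x_n + \sum_{j<n} a_{nj} x_j$, keeping $y_i=x_i$ for $i<n$, so that $x_n = y_n - \alpha$ with $\alpha := \sum_{j<n} a_{nj} y_j$. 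Decomposing
\[
h = x_n^{q+1} + \sum_{i<n} a_{in}\, x_i^q x_n + \sum_{j<n} a_{nj}\, x_n^q x_j + \sum_{i,j<n} a_{ij}\, x_i^q x_j
\]
and expanding using $(y_n-\alpha)^q = y_n^q - \alpha^q$, the cross-terms linear in $y_n^q$ cancel ($-a_{nj}$ from $-y_n^q \alpha$ against $+a_{nj}$ from the third sum), and those linear in $y_n$ cancel thanks to $a_{in}=a_{ni}^q$. The result is
\[
h = y_n^{q+1} + \sum_{i,j<n} (a_{ij} - a_{ni}^q a_{nj})\, y_i^q y_j,
\]
which displays $h$ in the desired block form (\ref{form}).

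No step presents real difficulty once the substitution has been identified; the heart of the matter is noticing that (\ref{require}) is precisely what causes the $y_n$-cross-terms to cancel after Frobenius distributes over $(y_n-\alpha)^q$. The surviving piece $\sum_{i,j<n}(a_{ij} - a_{ni}^q a_{nj})\, y_i^q y_j$ is automatically a Frobenius form in $y_1,\dots,y_{n-1}$, and its full rank---needed for the inductive step in Theorem \ref{fullrank}---follows since the new representing matrix is block diagonal with a $1\times 1$ block equal to $1$, while linear coordinate changes preserve full rank.
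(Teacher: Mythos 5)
Your proof is correct and takes essentially the same approach as the paper: after normalizing $a_{nn}=1$, substitute $x_n \mapsto x_n - \sum_{j<n}a_{nj}x_j$ and verify that condition~(\ref{require}) makes the $x_n$-cross-terms cancel. The paper states "one simply checks" this cancellation (and offers an equivalent row/column-operation description via Lemma~\ref{rowops}); you carry out the expansion explicitly and correctly.
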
 

\begin{proof} 
 Note that if $a_{nn} = 0$ is zero, then the condition (\ref{require}) implies that the last row is zero, contrary to the full rank assumption on $A$. 
So $a_{nn}\neq 0$. We can therefore assume, without loss of generality, that $a_{nn} = 1$. Indeed, 
scaling $x_n$ by a $(p^{e}+1)$-th root of $a_{nn}^{-1}$ (call it $c$)  changes the matrix  $A$ by multiplying row $n$ by $c^{p^e}$ and column $n$ by $c$ (see Lemma \ref{rowops}). 
This allows us to assume that $a_{nn} = 1$  without destroying condition (\ref{require}).

Now, assuming $a_{nn} = 1$,  the 
change of coordinates that sends 
 $$
x_n \mapsto  x_n -a_{n1}x_1-a_{n2}x_2 -  \cdots  - a_{n,n-1}x_{n-1} 
$$
and fixes $x_i$ for $1 \leq i \leq n-1$ gives us the desired form.  One simply checks that substituting $x_n -a_{n1}x_1-a_{n2}x_2 -  \cdots  - a_{n,n-1}x_{n-1} $ for $x_n$  into $h$ produces a polynomial of the form $x_n^{p^e+1} + g(x_1, \dots, x_{n-1})$. 
Alternatively, invoking Lemma \ref{rowops}, because of the special form of $A$, we see that subtracting $a_{ni}$ times column  $n$ from column $i$ will place a zero in the $i$-th column of the final row, while the corresponding row operation also makes the $i$-th row of the final column zero.
\end{proof}

\bigskip Continuing the proof of Theorem \ref{fullrank} now
armed with Lemma \ref{adela}, we note that it suffices to show that any full rank Frobenius form can be put in the form  (\ref{niceform}). 
Let $h = \begin{bmatrix} \vec x^{[p^e]} \end{bmatrix}^{tr}  A  \begin{bmatrix} \vec x \end{bmatrix} $ be an arbitrary Frobenius form.
Suppose   $g$ is a change of coordinate matrix with indeterminate entries.  Changing coordinates, the new   matrix of $g\cdot h$ is  
\begin{equation}\label{A-mult}
\tilde{A} = (g^{[p^e]})^{tr} A g. 
\end{equation}
We need to show that there is a choice of $g$ such that the entries of $\tilde{A}$ satisfy the hypothesis of Lemma  \ref{adela}. 

Thinking of the entries of $g$ as indeterminates $Y_{ij}$,  the matrix product  (\ref{A-mult}) has entries
\begin{equation}\label{A-entry}
 \tilde{A}_{ij} =  \sum_{1\leq k,\ell  \leq n} a_{\ell k} Y^{p^e}_{\ell  i}Y_{kj},
 \end{equation} which are homogeneous polynomials in the $Y_{ij}$.
It suffices to prove that there exist values of the $Y_{ij}$ that satisfy the equations
\begin{equation}\label{system}
\tilde{A}_{in} \tilde A_{nn}^{p^e-1} =  \tilde{A}_{ni}^{p^e}
 \,\,\,\,\,\,\,\, \,\,\,\,\,\,\,\, {\text{for all }} \ i = 1, 2, \dots, n-1, 
 \end{equation}
and for which the matrix $g$ has a non-zero determinant. 

Plugging in the  expressions (\ref{A-entry}),  the equations (\ref{system})  become
\begin{equation}\label{system1}
\tilde A_{nn}^{p^e-1}\left(\sum_{1\leq k,\ell  \leq n} a_{\ell k} Y^{p^e}_{\ell  i}Y_{kn}\right) = \left(\sum_{1\leq k,\ell  \leq n} a_{\ell k} Y^{p^e}_{\ell  n}Y_{ki}\right)^{p^e}
 \,\,\,\,\,\,\,\, i = 1, 2, \dots, n-1,
\end{equation} 
each of 
which can be rearranged into a linear equation in $Y_{1i}^{p^e},  Y_{2i}^{p^e}, \dots, Y_{ni}^{p^e}$:
\begin{equation}\label{system2}
F_1 Y_{1i}^{p^e} + F_2 Y_{2i}^{p^e} + \dots + F_{n} Y_{ni}^{p^e}  = 0 \,\,\,\,\,\,\,\, \,\,\,\,\,\,\,\,  i=1, \dots n-1,
\end{equation} 
where the coefficients $F_j$ of $ Y_{ji}^{p^e}$ are 
$$
F_j = \left(\tilde A_{nn}^{p^e-1}\sum_{k=1}^na_{jk} Y_{kn}   -\sum_{\ell=1}^n a_{\ell j}^{p^e}Y_{\ell n}^{p^{2e}}  \right)    \,\,\,\,\,\,\,\, \,\,\,\,\,\,\,\,  j=1, \dots, n.
$$
The key things to notice here are that the coefficient  $F_j$ 
of $Y_{ji}^{p^e}$ in the equations (\ref{system2}) is the {\it same} for each $i = 1, \dots, n$, and that it is a polynomial in  {\it only} $Y_{1n}, \dots, Y_{nn}$, the entries of the last column of the matrix $g$.  Thus the $F_1, \dots, F_{n}$ form a homogenous system of polynomials in  the $n$ indeterminates  $Y_{1n}, \dots, Y_{nn}$ of degree $p^{2e}$.

We claim that there is a choice of values for $Y_{1n}, \dots, Y_{nn}$, not all zero, for which all $F_1, \dots, F_n$ vanish.  In this case, we can take $g$ to be the matrix that has these values as its final column, with any choice of the first $n-1$ columns that makes $g$ invertible. For this choice of $g$, we will have proved that changing coordinates by $g$, the form $h$ can be put into the desired form of Lemma \ref{adela}. Thus the proof is complete once we have found a non-zero solution to the system $\{F_1=F_2=\dots = F_n =0\}$.

To prove this claim, we invoke Hilbert's Nullstellensatz:  provided the ideal generated by $F_1, F_2, \dots,  F_n$  in $k[Y_{1n}, Y_{12}, \dots, Y_{nn}]$ is not  $\langle Y_{1n}, Y_{12}, \dots, Y_{nn}\rangle $-primary,  the Nullstellensatz provides the needed non-zero solution.
 But expanding out the vacuously true expression $\tilde{A}_{nn}\tilde A_{nn}^{p^e-1} = \tilde A_{nn}^{p^e}$ produces the following relation: 
$$
Y_{1n}^{p^e}F_1 + Y_{2n}^{p^e}F_2 + \dots + Y_{nn}^{p^e} F_n = 0.
$$ 
Since $Y_{nn}^{p^e}$  has degree $p^e$, it cannot be in the ideal generated by the elements  $F_1, F_2, \dots,  F_{n-1}$, which have degree $p^{2e}$, 
 showing that  $\{F_1, F_2, \dots,  F_n\}$ is {\it not} a regular sequence. Thus the ideal $\langle F_1, F_2, \dots,  F_n\rangle$ has height strictly less than $n$. Thus 
 the Nullstellensatz gives the  needed non-zero solution to the system $F_1=F_2=\dots = F_n =0$. The theorem is proved.
 \end{proof}

\begin{remark}\label{JK}
J\'anos Koll\'ar suggested a different argument for Theorem \ref{fullrank} based on showing the stabilizer of the $GL_n$-action on the space of Frobenius forms is zero dimensional.
\end{remark}


\section{Isomorphism Types of Extremal Singularities.}\label{IsoTypes}

In this section, we classify Frobenius forms up to linear changes of coordinates, working always over an algebraically closed 
field $k$ of  characteristic $p>0$. Throughout, $q$ denotes a fixed power of $p$.

\begin{theorem}\label{partitions} There is a bijection between partitions of $n$ and  non-degenerate Frobenius forms  in  $n$ variables (with  fixed degree), up to change of coordinates.

The bijection sends a partition  $n=\sum s$ to  the  Frobenius form class represented by a  block diagonal matrix
with exactly one  $s\times s$ block $J_s$ along the diagonal  for each summand $s$ in the partition; the blocks  $J_s$ are
\begin{equation}\label{J}  J_1 = [1] \,\,\,\,\,\,\,\,\, {\text{and}} \,\,\,\,\,\,\,
J_s \,\, =\,\,  \begin{bmatrix} 
0 & \textcolor{blue}{1} & 0   & \dots & 0  \\
0 & 0  & \textcolor{blue}1 &    \dots & 0 \\
\vdots & \vdots & \ddots &\textcolor{blue} \ddots & \vdots \\
0 & 0 &   \dots  & 0 & \textcolor{blue}1  \\
0 & 0 &  \dots & 0 &  0\\
\end{bmatrix}  \,\,\,\,\, {\text{for}} \,\,\, s\geq 2,
\end{equation}
where the matrix $J_s$ for $s\geq 2$ has $1$'s on the super-diagonal and zeros elsewhere. 
\end{theorem}

\begin{example} There are three partitions of three: $3=1+1+1, \,\, 3=1+2$ and $3=3$. So Theorem \ref{partitions} says there are three equivalence classes of  non-degenerate Frobenius forms in three variables, corresponding, respectively, to the three matrices (with the blocks $J_s$ shaded)
$$
\left[
\arraycolsep=3.3pt
\begin{array}{ccc}
\cellcolor{blue!15}1 & 0 & 0 \\ 
0 & \cellcolor{blue!15}1 & 0 \\ 
0 & 0 & \cellcolor{blue!15}1 
\end{array}
\right],
\,\,\,\,\,\,\,\,
\left[
\arraycolsep=3.3pt
\begin{array}{ccc}
\cellcolor{blue!15} 1 & 
0 & 
0 \\ 
0 & 
\cellcolor{blue!15} 0 &
\cellcolor{blue!15} 1 \\ 
0 & 
\cellcolor{blue!15} 0 & 
\cellcolor{blue!15} 0 
\end{array}
\right]
\,\,\,\,\,\,{\text{and}} \,\,\,\,\,\,\,\,
\left[
\arraycolsep=3.3pt
\begin{array}{>{\columncolor{blue!15}} c >{\columncolor{blue!15}} c >{\columncolor{blue!15}} c}
0 & 1 & 0 \\ 
0 & 0 & 1 \\ 
0 & 0 & 0 
\end{array}
\right].
$$

\noindent
 These determine, respectively, the forms $x^{q+1} + y^{q+1} + z^{q+1}, \,\, x^{q+1} + y^qz\, $ and  $\,x^qy+ y^qz$.  (These were classified already in \cite{cubicspaper}.)
\end{example}

\begin{definition}\label{standard}
 A Frobenius form   (or its matrix) constructed from a partition in this way will be said to be in  {\bf standard form}.
 \end{definition}
 
Theorem \ref{partitions} says that every Frobenius form can be brought into one and only one standard form, up to permuting the blocks, by  a linear change of coordinates.
The full rank  $n$ case is the content of Theorem \ref{fullrank} and the rank $n-1$ was proved by Hoang \cite{Hoang}.

 \smallskip
 
\begin{remark}\label{HermFrob}
 The special class of Hermitian forms (see definition in  Remark \ref{Hermitian}) are uniquely determined by their rank: up to changing coordinates, we have only $x_1^{q+1}+ \dots + x_r^{q+1}$
\cite[4.1]{BC}.
 In particular,  there are many more Frobenius forms than Hermitian forms. \end{remark}

 \smallskip
 \begin{remark} \label{HWCompare} The standard forms of Theorem \ref{partitions} look similar to those in \cite[Satz 11]{HesseWitt}, but those are normal forms for a {\it different} action of GL($n$) on  $n\times n$ matrices over a field of characteristic $p$ defined by $A \mapsto gAg^{[-p]}$ (where $g^{[-p]}$ means the inverse of  $g^{[p]}$). Indeed, while the  {\it number} of distinct orbits is the same as for the action  $A \mapsto  (g^{[p]})^{tr}Ag$,  the orbits themselves  are  {\it different.}  For example,  $\begin{bmatrix} 1 & 1 \\ 0 & 0 \end{bmatrix}$  and $\begin{bmatrix} 1 & 0 \\ 0 & 0 \end{bmatrix}$ 
 are equivalent under the action in \cite[Satz 11]{HesseWitt}, but these matrices do not define equivalent Frobenius forms. 
  \end{remark}

 \medskip
\subsection{The proof.}
 Theorem \ref{partitions} will be proved  in two steps. We first show that every Frobenius form can be put into  standard form by a linear change of coordinates (Theorem \ref{stand}).
   We then  show that  if two Frobenius forms $f_{\alpha}$ and $f_{\beta}$ in standard form are equivalent, then their partitions $\alpha$ and $\beta$ are the same (Lemma \ref{final}).

 Towards the first step, it is helpful to consider a broader class of Frobenius forms:

\smallskip
\begin{definition} A  matrix
 is  {\bf sparse} if it has 
at most one non-zero entry in each column   and  in each row,  and these nonzero entries are all $1$. A
 Frobenius form is {\bf sparse} if its matrix is sparse.  \end{definition}
 Put differently, 
 a sparse Frobenius form is one of the form
$$
f = \sum_{k=1}^r x_{i_k}^qx_{j_k}
$$
where the row indices $\{i_1,..., i_r\}$ are all distinct and the column  indices $\{j_1,\dots, j_r\}$ are all distinct.
Sparse forms are always non-degenerate (if we view them as forms in the variables  that explicitly appear in them); this follows from Lemma \ref{Janet} and is proved carefully in   \cite[5.3]{WicaPaper}.

The following crucial  lemma reduces the proof of Theorem \ref{partitions} to  a combinatorial argument:

\begin{lemma}
 \label{Emily} Every  Frobenius form  is projectively equivalent to  a {\it sparse} Frobenius form.
\end{lemma}

\begin{proof}
Consider a non-degenerate Frobenius form of embedding dimension $n$ and rank $r$.  We induce on $n$ to show that after a change of coordinates,  
its matrix can be made sparse.  

When $n\leq 2$, the form  is projectively equivalent to  $x^{q+1}, \,  x^{q+1} + y^{q+1}$\, or \,
 $x^{q}y$.  The corresponding  matrices are
   \[ \begin{bmatrix} 1 \end{bmatrix}, \,\,\,
   \begin{bmatrix}
   1& 0 \\
   0 & 1 \\
   \end{bmatrix} \,\,\,\,\,\,{\text{and}} \,\,\,\,\,
   \begin{bmatrix}
   0 & 1 \\
   0 & 0 \\
   \end{bmatrix},    \] which are sparse.

Now assume $n\geq 3$.  By Theorem \ref{fullrank}, we need only consider the case where  $r<  n$, since the diagonal form $\sum_i x_i^{q+1}$ is sparse. 

  We  first claim that after a change of coordinates,  the matrix $A$ of the Frobenius form can be assumed to be in the  block form 
\begin{equation} \label{form-1}
\bbordermatrix{ 
&  2r-n  & n-r & n-r  \    \cr
    2r-n  \ & B & C & \mathbf{0}   \cr 
    n-r & \mathbf{0} & \mathbf{0} & I   \cr
    n-r    &\mathbf{0} & \bf \mathbf{0} &  \mathbf{0} \cr}
\end{equation}
where $I$ is an $(n-r) \times (n-r)$  identity matrix.
This follows from Lemma  \ref{Janet}: writing the form $x_1^qL_1 + \dots + x_r^qL_r$,  the span of the linear forms
$\{x_1, \dots, x_r, L_1, \dots, L_r\}$ is $n$ dimensional, for otherwise the form is degenerate. So there is some set of $n-r$  forms $L_i$ which, together with 
$\{x_1, \dots, x_r\}$, are linearly independent, and hence can be taken as our coordinates. Permuting  the variables if needed, we can assume these are the last $n-r$ of the $L_i$.
That is,  renaming so that $L_r=x_n$, $L_{r-1} = x_{n-1}, \dots,  L_{2r-n+1} = x_{r+1}$, the form can be assumed
$$
x_1^qL_1' + x_2^qL_2'  + \dots + x_{2r-n}^q L_{2r-n}'+  \underbrace{x_{2r-n+1}^qx_{r+1}  + \dots + x_{r-1}^qx_{n-1}+ x_r^qx_n}_{{\tiny{\text{produces the identity block of size} \,\,n-r}}},$$
 whose matrix nearly has  our claimed form \eqref{form-1}. The only issue is that  the linear forms $L_i'$ could involve the variables $x_{r+1}, \dots, x_n$ which would place non-zero entries above the identity matrix block. However, in this case, we can add multiples of the middle $n-r$ rows upwards to clear out any offending non-zero entries; the corresponding column operations may change the entries of $B$ but not any of the relevant blocks of zero (or the identity block).
 
 The submatrix $B$ in \eqref{form-1} represents a Frobenius form in $2r-n$ variables, which can be written non-degenerately in, say,   $m$ variables. Let $s$ be the rank of $B$. Thus after a change of coordinates, $B$ is  equivalent to a matrix with the block form
 \begin{equation}\label{B-form}
\bbordermatrix{ 
&  m & 2r-n-m      \cr
    m & D & \mathbf{0} \cr
    2r-n-m  & \mathbf{0} & \mathbf{0} \cr}
\end{equation}
 where $D$ is non-degenerate of rank $s$.  By induction, we may assume that $D$ is  sparse.   As a sub-matrix of $A$, the transformations of $B$ leading to this simplified form can be achieved by operations on $A$ that preserve each block of zeros, as well as the identity sub-matrix in \eqref{form-1}, though they may alter $C$.   Relabling the variables involved in $D$,  we can assume its last $m-s$ rows are zero. 
 Hence we can assume that $A$ has form \eqref{form-1}, where  $B$ is sparse, and its last $2r-n-s$ rows are zero. 
 
The columns of $B$ are the  standard basis elements $e_1, \dots, e_s$,  and so by adding multiples of them to   middle block of $n-r$ columns of $A$ in \eqref{form-1},  we can clear out the first $s$ rows of $C$.  The corresponding row operations  add multiples of rows $\{1, \dots, 2r-n\}$  to the middle block of $n-r$ rows of $A$ in \eqref{form-1}, possibly destroying those zero blocks. However,  this can be corrected by adding multiples of the last $n-r$ columns of $A$ to its first $r$ columns, as the corresponding row operations have no further effect. 

We have reduced to assuming that $A$ has the form
\begin{equation}\label{B-form-2}
\bbordermatrix{ 
&  2r-n  & n-r & n-r  \    \cr
    s & E & \mathbf{0} & \mathbf{0} \cr
    2r-n-s  \  & \mathbf{0} & F & \mathbf{0}   \cr 
    n-r  & \mathbf{0} & \mathbf{0} & I   \cr
    n-r    & \bf \mathbf{0} &  \mathbf{0} & \mathbf{0}  \cr}.
\end{equation}
where $E$ is the sparse submatrix of $B$ consisting of its first $s$ rows, and $F$ is the submatrix of $C$ consisting of its last $2r-n-s$ rows.  Next, we claim that if $F$ can be transformed by column operations to a matrix $G$, then $A$ is  equivalent to a matrix of the form \eqref{B-form-2}, with $G$ replacing $F$, and without affecting any other blocks. 

Indeed, column operations on $F$ correspond to column operations on the middle block of $n-r$ columns of $A$.  The corresponding row operations on $A$ leave all zero blocks unchanged but may alter the block $I$ in \eqref{B-form-2}.  However, any such alterations to $I$ can be corrected by Gaussian elimination on the last $n-r$ columns, restoring the matrix $I$ without disturbing anything else, since the corresponding rows are all zero-rows.

To conclude the proof, finally we show that $F$  be transformed into sparse form via column operations. Indeed, using
 Gaussian column elimination, we can put $F$ into a reduced column echelon form, which is sparse because  $F$ is a $(2r-n-s) \times n-r$ matrix of  rank $2r-n-s$. This complete the proof of the lemma.
\end{proof}

\subsection{The directed graph of a sparse form.}
In light of Lemma \ref{Emily}, we have  reduced the proof of Theorem \ref{partitions} to the more combinatorial problem of  classifying equivalence classes of sparse forms. 
For this, it is helpful to make the following definition.

\begin{definition} The {\bf labelled directed graph} of a sparse Frobenius form $f$ is the unique graph   $\Gamma_f$ whose vertices are the  variables $x_1, \dots, x_n$, with an edge from $x_i$ to $x_j$ whenever the term $x_i^qx_j$ appears with non-zero coefficient in $f$.

Put differently, the matrix $A_f$ of a sparse Frobenius form $f$ uniquely determines  a  labelled directed graph $\Gamma_f$ whose  adjacency matrix is $A_f$. 
\end{definition}

\begin{remark}\label{graphprop}
The  connected components of $\Gamma_f$  can be only  loops (singleton vertices with one edge in and out), directed chains, and cycles: the sparseness of $f$ means that 
each vertex has at most one edge entering and at most one edge leaving it. 
\end{remark}

\begin{figure}[H]
\centering
\begin{subfigure}[b]{0.5\textwidth}
\centering
\begin{tikzpicture}[node distance=.5cm ]
\node [draw=none] (1) {$x_1$};
\node [draw=none, right=of 1] (2) {$x_2$};
\node [draw=none, below=of 2] (3) {$x_3$};
\node [draw=none, below=of 1] (4) {$x_4$};
\draw [->]  (1) to [out=90,in=90]  (2);
\draw [->]  (3) to [out=-90,in=-90]  (4);
\draw [->]  (2) to [out=0,in=0]  (3);
\draw [->]  (4) to [out=180,in=180]  (1);
\end{tikzpicture}
\vspace{1cm}
\caption{\footnotesize{The  graph of $x_1^qx_2 + x_2^qx_3 + x_3^qx_4 + x_4^qx_1$}}
\end{subfigure}
\begin{subfigure}[b]{0.4\textwidth}
\centering
\begin{tikzpicture}[node distance=.5cm ]
\node [draw=none] (1) {$x_1$};
\node [draw=none, right=of 1] (2) {$x_2$};
\node [draw=none, below=of 2] (3) {$x_3$};
\node [draw=none, below=of 1] (4) {$x_4$};
\draw [->] (1) to [out=180,in=90,looseness=8] (1);
\draw [->] (2) to [out=0,in=90,looseness=8] (2);
\draw [->] (3) to [out=0,in=-90,looseness=8] (3);
\draw [->] (4) to [out=180,in=270,looseness=8] (4);
\end{tikzpicture}
\vfill
\caption{\footnotesize{The graph of $x_1^{q+1} + x_2^{q+1} + x_3^{q+1}+x_4^{q+1}$}}
\end{subfigure}
\end{figure}

 \begin{remark}\label{graph-prop}
By Lemma \ref{Janet},  the graph of a sparse Frobenius form $h$ satisfies:
\begin{enumerate}
\item[(a)] The number of vertices is the embedding dimension of $h$;
\item[(b)] The number of edges is the rank of $h$.
\end{enumerate}
\end{remark}

The  directed graph of a sparse Frobenius form is {\it not}  invariant under change of coordinates, as the example above shows. 
The next lemma says that to understand projective equivalence,  we can restrict our attention to graphs whose components are either loops or chains:

\begin{lemma}\label{sparse-standard} 
Every sparse Frobenius form is equivalent to a sparse Frobenius form whose directed graph has no cycles.
\end{lemma}

\begin{proof}[Proof of Lemma \ref{sparse-standard}] Let $f$ be  a sparse Frobenius form.
If its  graph $\Gamma_f$ contains an  $\ell$-cycle (say $x_{i_1} \rightarrow x_{i_2} \rightarrow \dots \rightarrow x_{i_\ell} \rightarrow x_{i_1}$), then the form $f$ can be written as 
$$f = x_{i_1}^qx_{i_2} + x_{i_2}^qx_{i_3} + \dots + x_{i_{\ell-1}}^qx_{i_\ell} + x_{i_{\ell}}^qx_{i_1}  + h$$ where $h$ does not involve the variables 
$x_{i_1}, \dots, x_{i_\ell}.$ Now change coordinates involving only the variables $x_{i_1}, \dots, x_{i_\ell}$  to transform $f$ into the equivalent form 
$$\tilde f = x_{i_1}^{q+1}+ x_{i_2}^{q+1} + \dots +  x_{i_{\ell}}^{q+1}  + h$$
(Theorem \ref{fullrank}).
This transforms  $\Gamma_f$ into the graph $\Gamma_{\tilde f}$ in which the $\ell$-cycle has been broken into $\ell$ loops, but whose remaining components are the same as in $\Gamma_f$.
Repeating the process on each cycle in the graph, we eventually arrive at a graph with no cycles.
\end{proof}

\begin{remark}
An alternate way to prove Lemma \ref{sparse-standard} is to observe that we can strengthen the inductive hypothesis in the proof of Lemma \ref{Emily} to show that every Frobenius form is equivalent to a sparse Frobenius form whose matrix is {\it upper triangular}. Then notice that the graph of such a sparse form can not contain any cycles.
\end{remark}
 \medskip

A finite directed graph whose components are all loops or directed chains is essentially a partition---namely the partition of its vertices into components.
We can put all these ideas together  to complete the first step in our proof of Theorem \ref{partitions}:

\begin{theorem}\label{stand}
Every Frobenius form is projectively equivalent to one in standard form.
\end{theorem}

\begin{proof} 
Using Lemma \ref{Emily},  we assume   $f$ is sparse, and using Lemma \ref{sparse-standard}, we assume its graph $\Gamma_f$ has no cycles. Thus  $\Gamma_f$ is a 
 disjoint union of loops and directed  chains (Remark \ref{graphprop}). Relabel the vertices  so
 all arrows point from $x_i$ to $x_{i+1}$ (or back to $x_i$ itself).  Focusing on one component, say of cardinality $s$, note that its adjacency matrix  is the matrix  $J_s$ as defined in (\ref{J}). So the full adjacency matrix $A_f$ of $\Gamma_f$ is a block diagonal matrix with blocks of the type $J_s$ for different values of $s$, one for each component of $\Gamma_f$. In other words, $A_f$ is a matrix in standard form (Definition \ref{standard}). 
 Thus the corresponding Frobenius form $f$ is in standard form  as well.
\end{proof}

\medskip 
To complete the classification, we need to show that the Frobenius forms associated to {\it different} partitions are not equivalent.
Therefore, it suffices to establish the following lemma:

\begin{lemma}\label{final} Let $f$ and $g$ be Frobenius forms  in standard form. If $f$ and $g$ are  equivalent, then the components of their associated 
 graphs $\Gamma_f$ and $\Gamma_g$  determine the same partition.
\end{lemma}

\begin{proof} 
We  induce on the embedding dimension $n$. The cases $n\leq 2$ are easy and were listed in the proof of Lemma \ref{Emily}. In addition, all full rank Frobenius forms are equivalent to   $\sum_{i=1}^n {x_i}^{q+1}$ (Theorem \ref{fullrank}), so correspond to the partition of all $1$'s. So we assume $n\geq 3$ and that  the rank $r$ satisfies $r< n$.

The graphs $\Gamma_f$ and $\Gamma_g$  partition  the vertices into components, determining partitions $\gamma_f$ and $\gamma_g$, respectively, of $n$.
The partition $\gamma_f$ can be written
$$ n = \underbrace{1 + \dots + 1}_{d_1} + \underbrace{2 + \dots + 2}_{d_2} + \underbrace{3 + \dots + 3}_{d_3} + \dots +  \underbrace{t + \dots + t}_{d_t},$$
where $d_i$ is the number of times the integer $i$ appears in the partition (note that some $d_i$ can be zero). Likewise, the partition $\gamma_g$ can be written 
$$ n = \underbrace{1 + \dots + 1}_{e_1} + \underbrace{2 + \dots + 2}_{e_2} + \underbrace{3 + \dots + 3}_{e_3} + \dots +  \underbrace{t + \dots + t}_{e_t}.$$

\begin{formula} \label{formula} Using Remark \ref{graph-prop}, we  count vertices and edges to get the following formulas for the embedding dimension and rank of $f$ and $g$ in terms of the partitions:
\begin{enumerate}
\item[(i)] $n =  \sum_{i\geq 1} i d_i = \sum_{i\geq 1} i e_i $.
\item[(ii)] $r =    d_1 + \sum_{i\geq 2} (i-1) d_i =   d_1 + \sum_{i\geq2}(i-1) e_i $.
\item[(iii)] $n-r = d_2 + \dots + d_t = e_2 + \dots + e_t.$
\end{enumerate}
\end{formula}

Say that a vertex of a directed graph is {\bf terminal} if there is no edge emanating from  it.   The number of terminal vertices in the graph of a sparse form 
  is $n-r$, by  Formula \ref{formula}.
 Let $\{y_1, \dots, y_{n-r}\}$ denote  the terminal  variables of  $f$; relabeling the variables of  $g$, we can assume $\{y_1, \dots, y_{n-r}\}$ are also the terminal variables of $g$.
Denote the remaining variables--- the non-terminal variables---  by $\{x_1, \dots, x_r\}$.  These are the ones appearing in $f$ and $g$ with exponent $q$.

Now define the {\bf pre-terminal} variables of $f$   to be those connected to a terminal variable  of  $\Gamma_f$ by an edge, and note that there are exactly  $n-r$ pre-terminal variables.  Call them
$\{x_{i_1}, \dots, x_{i_{n-r}}\}$. Relabeling the variables of $g$, we can assume that these are also the pre-terminal variables of $g$.  
So we can write 
 \begin{equation}\label{eq4}
 \begin{aligned}
f &= f_1(x_1, \dots , x_r) + x_{i_1}^qy_1 + x_{i_2}^qy_2 + \dots + x_{i_{n-r}}^qy_{n-r}\\
g &= g_1(x_1, \dots , x_r) + x_{i_1}^qy_1 + x_{i_2}^qy_2 + \dots + x_{i_{n-r}}^qy_{n-r}
\end{aligned}
\end{equation}
where $x_{i_\ell}$ is the pre-terminal variable corresponding to  terminal variable $y_\ell$.

\begin{claim} \label{claim} With notation as above, 
any  linear change of coordinates  $\phi$ such that  $\phi(f) = g$ must  preserve the ideal $\langle x_{i_1}, \dots, x_{i_{n-r}}\rangle$ generated by the pre-terminal variables. 
\end{claim}

To justify  Claim \ref{claim}, 
note that the defining ideal of singular locus of  both $f$ and $g$ is generated by $\{x_1, \dots, x_r\}$ (Proposition \ref{sing}).  In particular,  $\phi$ must preserve the ideal  $\langle x_1, \dots, x_r\rangle$. So if $L_i$ denotes the image $\phi(x_i)$, then $L_i$ must be a linear form in  $\{x_1, \dots, x_r\}$.

Let $M_1, \dots, M_{n-r}$ be the images of the terminal variables $y_1, \dots, y_{n-r}$ under $\phi$.  Applying $\phi$ to the first line of (\ref{eq4}), we have 
 \begin{equation}\label{eq1}
 \begin{aligned}
\phi(f)  &= f_1(L_1, \dots , L_r) + L_{i_1}^qM_1 + L_{i_2}^qM_2 + \dots + L_{i_{n-r}}^qM_{n-r}. \\
 &= g_1(x_1, \dots , x_r) + x_{i_1}^qy_1 + x_{i_2}^qy_2 + \dots + x_{i_{n-r}}^qy_{n-r}
\end{aligned}
\end{equation}
Writing  $$M_i = c_{i1}x_1 + \dots + c_{ir}x_r + b_{i1}y_1 + \dots + b_{in-r}y_{n-r},$$  and comparing the coefficient of $y_m$ in the  equal expressions (\ref{eq1}), we see that
$$
  x_{i_m}^q \,= \,\sum_{\ell = 1}^{n-r} b_{\ell m} L_{i_\ell}^q = \left( \sum_{\ell = 1}^{n-r} b_{\ell m}^{1/q} L_{i_\ell} \right)^q.
$$
 In particular, 
$$
\left<x_{i_1}, \dots, x_{i_{n-r}}\right> \subset \left<L_{i_1}, \dots, L_{i_{n-r}} \right>  = \left<\phi(x_{i_1}), \dots, \phi(x_{i_{n-r}}) \right>.
$$
Since both ideals are generated by $n-r$ linearly independent linear forms, they must be the same.
This says that  $\phi$ preserves the ideal of pre-terminal variables. The claim is proved. 

Given Claim \ref{claim}, it follows that 
 $\phi$ induces an isomorphism 
\begin{equation}\label{eq3}
\bar{\phi}: \,\,\frac{k[x_1, \dots, x_n]}{\left<x_{i_1}, \dots, x_{i_{n-r}}\right>}\,\,  \longrightarrow  \,\,  \frac{k[x_1, \dots, x_n]}{\left<x_{i_1}, \dots, x_{i_{n-r}}\right>},
\end{equation}
which we view as a change of coordinates for  the quotient polynomial ring.
Letting $\bar{f}$ and   $\bar{g}$ denote the images of $f$ and $g$, respectively,  in the quotient ring, we see that $\bar{\phi}$ defines a  equivalence 
between the Frobenius forms  $\bar{f}$ and   $\bar{g}$.

Now let us examine the graph $\Gamma_{\bar{f}}$ \, of ${\bar{f}}$. Killing the pre-terminal variables zeros out the monomials of $f$ in which the pre-terminal variables appear. 
This removes the  pre-terminal vertices from each directed chain of $\Gamma_f$, as well as any edge connected to them and any vertices left isolated by the process.
So the graph \,\,$\Gamma_{\bar{f}}$ \,\, of \, $\bar{f}$ is  obtained by removing  chains of length $2$ and $3$ from $\Gamma_f$, and  turning  chains of length $\ell \geq 4$ into  chains of length $\ell - 2$.

For example, the figure below shows a directed graph $\Gamma_f$ of a Frobenius form $f$, highlighting the  pre-terminal vertices and every edge connected to a pre-terminal vertex:
\begin{center}
\begin{tikzpicture}[node distance=.75cm ]
\node [draw=none] (1) {};
\node [draw=none, right=of 1] (2) {};
\node [draw=none, right=of 2] (3) {};
\node [draw=none, right=of 3] (4) {};
\node [draw=none, right=of 4] (5) {};
\node [draw=none, right=of 5] (6) {};
\node [draw=none, right=of 6] (7) {};
\node [draw=none, right=of 7] (8) {};
\node [draw=none, right=of 8] (9) {};
\node [draw=none, right=of 9] (10) {};
\node [draw=none, right=of 10] (11) {};
\node [draw=none, right=of 11] (12) {};
\node [draw=none, right=of 12] (13) {};
\node[draw=none] at (.2,-1) (x1){\footnotesize{$x_1^{q+1} \;\;+$}};
\node[draw=none] at (1.9,-1) (y1){\footnotesize{$(x_2^{q}y_1) \; \; \;\;\;\; +$}};
\node[draw=none] at (3.9,-1) (y2){\footnotesize{$(x_ {3}^qy_2) \;\;\;\;\; +$}};
\node[draw=none] at (6.55,-1)(y3){\footnotesize{$(x_4^qx_5 + x_5^qy_3) \;\;\;\;\; +$}};
\node[draw=none] at (10.5,-1) (y4){\footnotesize{$(x_6^qx_7 + x_7^qx_8+ x_8^qx_9 + x_9^qy_4)$}};
\draw[fill] (1) circle [radius=0.03];
\draw[draw=red] (2) circle [radius=0.03];
\draw[fill] (3) circle [radius=0.03];
\draw[draw=red] (4) circle [radius=0.03];
\draw[fill] (5) circle [radius=0.03];
\draw[fill] (6) circle [radius=0.03];
\draw[draw=red](7) circle [radius=0.03];
\draw[fill] (8) circle [radius=0.03];
\draw[fill] (9) circle [radius=0.03];
\draw[fill] (10) circle [radius=0.03];
\draw[fill] (11) circle [radius=0.03];
\draw[draw=red] (12) circle [radius=0.03];
\draw[fill] (13) circle [radius=0.03];
\draw [draw=red,->] (2) -- (3);
\draw [draw=red,->]  (4) -- (5);
\draw[draw=red,->]  (6) -- (7);
\draw[draw=red,->] (7) -- (8);
\draw [->] (9) -- (10);
\draw [->] (10) -- (11);
\draw [draw=red,->]  (11) -- (12);
\draw [draw=red,->] (12) -- (13);
\draw [->] (1) to [out=50,in=100,looseness=10] (1);
\end{tikzpicture}

\end{center}
Modding out the pre-terminal variables $\{x_2, x_3, x_5, x_9\}$  produces the Frobenius form $\bar{f}$ with graph $\Gamma_{\bar{f}}$:
\begin{center}
\begin{tikzpicture}[node distance=.75cm ]
\node [draw=none] (1) {};
\node [draw=none, right=of 1] (2) {};
\node [draw=none, right=of 2] (3) {};
\node [draw=none, right=of 3] (4) {};
\node[draw=none] at (.2,-1) (x1){\footnotesize{$x_1^{q+1} \;\;+$}};
\node[draw=none] at (2.2,-1) (y4){\footnotesize{$(x_6^qx_7 + x_7^qx_8)$}};
\draw[fill] (1) circle [radius=0.03];
\draw[fill] (2) circle [radius=0.03];
\draw[fill] (3) circle [radius=0.03];
\draw[fill] (4) circle [radius=0.03];
\draw [->] (2) -- (3);
\draw [->] (3) -- (4);
\draw [->] (1) to [out=50,in=100,looseness=10] (1);
\end{tikzpicture}

\end{center}

In particular, the partition $\gamma_{\bar{f}}$ is
$$
r =  \underbrace{1 + \dots + 1}_{d_1} + \underbrace{2 + \dots + 2}_{d_4} + \underbrace{3 + \dots + 3}_{d_5} + \dots +  \underbrace{(t-2) + \dots + (t-2)}_{d_t},
$$
and the partition $\gamma_{\bar{g}}$ is
$$
r =  \underbrace{1 + \dots + 1}_{e_1} + \underbrace{2 + \dots + 2}_{e_4} + \underbrace{3 + \dots + 3}_{e_5} + \dots +  \underbrace{(t-2) + \dots + (t-2)}_{e_t}.
$$
By induction, because $\bar{f}$ and $\bar{g}$ are equivalent and of embedding dimension less than $n$, their partitions are the same, so that  $d_i=e_i$ for all $i\neq 2, 3$.

Since $\gamma_f$ and $\gamma_g$ are both partitions of $n$,   Formula \ref{formula}(i) now implies that  $2d_2+3d_3= 2e_2+3e_3$. But Formula \ref{formula}(iii) also gives 
$d_2+d_3 = e_2+e_3.$
Together, these two equations imply finally that $d_2=e_2$ and $d_3=e_3$.
 Thus $f$ and $g$ had the same partition to start.
\end{proof}




\section{Geometric Properties of Extremal Singularities}\label{Geometry}


 Smooth projective  varieties defined by (certain special)  Frobenius forms have long been understood to be extremal in various ways, going at least back to Beniamino Segre \cite{segre}. 
 It is easy to see that they contain many linear subspaces, for example,  which can be used to show that they are extremal from the point of view of 
 containing rational points; see \cite{kollar14}, \cite{BC} and \cite{HommaKim}.

In this section, we collect a few interesting properties of extremal  hypersurfaces. By {\bf extremal hypersurface}, we mean a  {\it projective}  hypersurface 
 defined by a (not necessarily reduced) Frobenius form; in the reduced case, an extremal hypersurface is a projective hypersurface for which the affine cone over it is an extremal singularity.

\subsection{Hyperplane Sections} It is easy to see that every  hyperplane section of {an}  extremal hypersurface  is extremal. Somewhat surprisingly, the converse is also true:

\begin{theorem}\label{sections}
If $X$ is an extremal hypersurface, then so is every hyperplane section (which is not just a component of $X$). Conversely,
if the ground field is algebraically closed and $n\geq 3$, then any hypersurface $X \subset \mathbb P^n$ with the property that 
 all its hyperplane sections are extremal must itself be  extremal. 
\end{theorem}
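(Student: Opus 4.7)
The forward direction should be routine: linear changes of coordinates preserve the class of Frobenius forms (Section~\ref{LinearAlgebra}), so after coordinates I can take $H=\{x_n=0\}$, and then $f=\sum_{i=0}^{n} x_i^{p^e}L_i$ restricts to $\sum_{i<n} x_i^{p^e}\bigl(L_i|_{x_n=0}\bigr)$, which is again a Frobenius form (or zero, when $H\subset X$).

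For the converse, my strategy has three steps. \emph{Step 1.} Since $n\geq 3$, some hyperplane $H$ meets $X$ properly, so $f|_H$ is a nonzero extremal form of degree $\deg f$, forcing $\deg f=p^e+1$ for a fixed $e\geq 1$. By Theorem~\ref{justify}, the goal reduces to showing $f\in\mathfrak{m}^{[p^e]}$, where $\mathfrak{m}=\langle x_0,\ldots,x_n\rangle$.

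\emph{Step 2.} Decompose $f=g+h$ with $g\in\mathfrak{m}^{[p^e]}$ and $h$ the sum of the \emph{bad} monomials $c_\alpha x^\alpha$ of $f$, meaning those with $\alpha_i<p^e$ for every $i$. A quick check shows $g|_H\in\mathfrak{m}_H^{[p^e]}$ for every hyperplane $H$ (the $p^e$th power of a linear form is itself a sum of $p^e$th powers of the remaining coordinates), so by hypothesis $h|_H\in\mathfrak{m}_H^{[p^e]}$ as well, and the task reduces to showing $h=0$.

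\emph{Step 3.} For each index $j$, analyze the family of hyperplanes $H_{j,\lambda}\colon x_j=\sum_{i\neq j}\lambda_i x_i$. Expanding $h|_{H_{j,\lambda}}$ by the multinomial theorem and extracting the coefficient of a bad target monomial $x^\beta$ (every $\beta_i<p^e$), the condition $h|_{H_{j,\lambda}}\in\mathfrak{m}_{H_{j,\lambda}}^{[p^e]}$ forces that coefficient to vanish for every $\lambda$; since $k$ is infinite, it vanishes as a polynomial identity in the $\lambda_i$. The crucial point is that for each pair $(\beta,\gamma)$ the coefficient of $\lambda^\gamma$ inside the coefficient of $x^\beta$ comes from a \emph{uniquely determined} source $c_\alpha x^\alpha$ in $h$ (one recovers $\alpha$ from $(\beta,\gamma)$) and equals $c_\alpha\binom{\alpha_j}{\gamma}$. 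Choosing $\gamma=\alpha_j\,e_i$ for some $i\neq j$ makes this multinomial equal $1$, forcing $c_\alpha=0$ whenever $\alpha$ is bad and $\alpha_i+\alpha_j<p^e$ for some $i\neq j$.

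Finally, every surviving bad exponent $\alpha$ must then satisfy $\alpha_i+\alpha_j\geq p^e$ for \emph{every} pair $i\neq j$. Summing over the $n(n+1)$ ordered pairs of distinct indices in $\{0,\ldots,n\}$ (each coordinate of $\alpha$ counted $2n$ times) gives $2n(p^e+1)\geq n(n+1)\,p^e$, i.e., $(n-1)\,p^e\leq 2$, contradicting $n\geq 3$ and $p^e\geq 2$. Hence $h=0$ and $f\in\mathfrak{m}^{[p^e]}$.

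I expect the main obstacle to be the combinatorial bookkeeping in Step~3: verifying that each pair $(\beta,\gamma)$ really picks out a \emph{single} source $c_\alpha x^\alpha$ in $h$, so that the vanishing of the $\lambda$-polynomial isolates the individual coefficients $c_\alpha$ without cancellation. Once that point is secure, the final double-counting inequality closes the argument immediately.
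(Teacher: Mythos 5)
The forward direction matches the paper's one-line argument. For the converse, both you and the paper decompose $f$ into the "Frobenius part" (monomials divisible by some $x_i^{p^e}$) plus the "bad part" (all exponents $<p^e$) and must show the bad part vanishes, but from there the arguments diverge. The paper first shows each $x_i$ divides the bad part by reducing modulo $x_i$, factors it as $(x_0\cdots x_n)\,h$, and then uses the family $L=x_0-cx_1$ together with the regular-sequence Lemma~\ref{reg} to show $h$ is divisible by infinitely many distinct linear forms, hence zero. Your argument instead works with the full generic hyperplane family $x_j=\sum_{i\neq j}\lambda_ix_i$, views the coefficient of each bad target monomial $x^\beta$ in $h|_{H_{j,\lambda}}$ as a polynomial in $\lambda$, and exploits the fact that a given $(\beta,\gamma)$ determines a unique source exponent $\alpha$ with multinomial coefficient $\binom{\alpha_j}{\gamma}$; choosing $\gamma=\alpha_j e_i$ isolates $c_\alpha$ with unit coefficient, so $c_\alpha=0$ whenever some pair satisfies $\alpha_i+\alpha_j<p^e$, and the survivors are ruled out by summing $\alpha_i+\alpha_j\geq p^e$ over all $n(n+1)$ ordered pairs to get $(n-1)p^e\leq 2$. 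This is correct: the uniqueness of $\alpha$ given $(\beta,\gamma)$ is exactly the no-cancellation point you flag, and the edge case $p^e=1$ is vacuous since there are no bad monomials in degree $2$. The payoff of your route is that it avoids the factorization/divisibility lemma entirely and instead closes by elementary double-counting, at the cost of somewhat heavier multi-index bookkeeping; the paper's route is shorter once Lemma~\ref{reg} is in hand, but leans on a less transparent divisibility chain.
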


\begin{example}
The second claim of Theorem \ref{sections} is false for $n<3$. For example, the plane curve of characteristic two defined by the vanishing of $x^3+y^3+z^3+xyz$ is not extremal, but every hyperplane section is extremal. Indeed, every polynomial of degree three in two variables $x, y$ is contained in $\langle x^2, y^2\rangle$.\end{example}

\begin{proof}
Since both degree and inclusion in  $\mathfrak m^{[p^e]}$  are preserved under taking the quotient by a linear form, 
the first statement is clear. 

For the converse, we set up some notation.  For a form  $f$  and a  linear form $L$, let $\bar f$ denote the form  $f\bmod L$ in the polynomial ring $k[x_0, x_1, \dots, x_n]/\langle L\rangle.$

Now suppose that  $f$ is  the defining equation of  the hypersurface $X$ with the property that every hyperplane section is  extremal.  Then  $\bar f$  is a Frobenius form (for all choices of $L$) so $f$ has degree $p^e+1$ for some $e$.

Write $f$ {\it uniquely} as $f = \sum_{i=0}^nx_i^{p^e} L_i  + g$, where the $L_i$ are linear forms, and $g$ is some form {\it none of whose monomials are divisible by any $x_i^{p^e}$}.  We need to show that $g$ is zero. For this, it suffices to show that $g$ is  divisible by infinitely many distinct (up to scalar multiple) linear forms. 
 
Fix any linear form $L$. By hypothesis, $\bar f  $ is a Frobenius form.  Since the set of  Frobenius forms is closed under addition,  also 
$\bar g$ is a Frobenius form. 
 Now if $L=x_i$,  the restriction on the monomials of $g$ implies  that $\bar g=0$---that is, that $x_i$ divides $g$ for each $i$. So  without loss of generality
$$
g =  (x_0x_1\dots x_n) h,
$$
where $h$ is a form of degree $p^e+1 - (n+1)$.

Next, we consider what happens when  $L= x_0 -  c x_1$ for some $c\in k$.
Using the isomorphism
$$
k[x_0, x_1, \dots, x_n]/\langle x_0-cx_1\rangle \longrightarrow  k[y_1, \dots, y_n] \,\,\,\,\,\,\,\,\,\, \,\,\,\,\,\,\,\, \begin{cases}
x_0 \mapsto cy_1  \\ x_i\mapsto y_i \,\,\,\,\,\,\,\,\  i\geq 1\end{cases}
$$
we see that because $g \bmod L$ is a Frobenius form, also 
$$
y^2_1y_2\dots y_n  \tilde {h}  \in \langle y_1^{p^e}, \dots, y_n^{p^e}\rangle
$$
where $\tilde h$ denotes the image of $h$ in the polynomial ring $k[y_1, \dots, y_n]$.
Because  $y_1, \dots, y_n$ form a regular sequence, this yields (see Lemma \ref{reg}) 
$$
\tilde{h}  \in \langle y_1^{p^e-2},  y_2^{p^e-1}, \dots, y_n^{p^e-1}\rangle.
$$
But the degree of $\tilde h$ is $p^e - n $ which is strictly less than $p^e-2$. So $\tilde h = 0$. In other words, $x_0-cx_1$ divides $h$.   Since $c$ was an arbitrary element of $k$,  $h$ must have at least  $|k|$   distinct linear factors.  Since $k$ is infinite, the proof is complete.
\end{proof}

\begin{corollary} \label{cor1}
If $X$ is a smooth  extremal hypersurface over an algebraically closed field, then all  smooth hyperplane sections are isomorphic.
\end{corollary}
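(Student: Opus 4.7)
The plan is to combine Theorem~\ref{sections} with Theorem~\ref{fullrank} and the singular locus description in Proposition~\ref{sing}.

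First I would observe that since $X \subset \mathbb{P}^n$ is an extremal hypersurface, it is cut out by a Frobenius form $f$ of some degree $q+1$, with $q = p^e$. For any hyperplane $H \subset \mathbb{P}^n$ with defining linear form $L$, Theorem~\ref{sections} tells us that the hyperplane section $X \cap H$ is again extremal; concretely, after identifying $H$ with $\mathbb{P}^{n-1}$ via any choice of linear coordinates, $X\cap H$ is defined by the Frobenius form $\overline{f} = f \bmod L$ of the same degree $q+1$ in $n$ variables.

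The next step is to check that \emph{smoothness} of a hyperplane section forces the corresponding Frobenius form $\overline{f}$ to have full rank. By Proposition~\ref{sing}, the singular locus of the affine hypersurface defined by $\overline{f}$ (in the $n$-dimensional affine space coordinatizing $H$) is a linear subspace whose codimension equals the rank of the representing matrix. Since $X \cap H$ is smooth as a projective variety, its affine cone has at most the origin as singularity, forcing this linear subspace to be zero-dimensional. Equivalently, the matrix representing $\overline{f}$ has rank $n$, the maximum possible in $n$ variables.

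Finally, I would apply Theorem~\ref{fullrank}: over the algebraically closed ground field $k$, every full-rank Frobenius form in $n$ variables of degree $q+1$ is equivalent, under a linear change of coordinates, to the diagonal form $y_1^{q+1} + y_2^{q+1} + \cdots + y_n^{q+1}$. Consequently, every smooth hyperplane section of $X$ is isomorphic (via a linear change of coordinates on $H$) to the same projective hypersurface, namely the diagonal extremal hypersurface of dimension $n-2$ and degree $q+1$. In particular, any two smooth hyperplane sections of $X$ are isomorphic to each other. There is no real obstacle here — the corollary is essentially a packaging of the earlier results; the only small verification worth making explicit is the equivalence between smoothness of the projective section and full rank of the associated matrix, which is immediate from Proposition~\ref{sing}.
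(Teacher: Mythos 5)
Your proof is correct and follows essentially the same route as the paper: invoke Theorem~\ref{sections} to see every hyperplane section is extremal, note that smoothness forces the representing matrix to have full rank, and then apply Theorem~\ref{fullrank} to conclude all smooth sections are projectively equivalent to the diagonal form. You spell out the rank step via Proposition~\ref{sing}, which the paper leaves implicit, but the argument is identical.
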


\begin{proof} The hyperplane sections of $X$ are  extremal  by Theorem  \ref{sections}. So the smooth hyperplane sections are cones over  full rank extremal singularities, and hence   all projectively equivalent to the diagonal hypersurface  $\sum_{i=1}^n  x_i^{p^e+1}$ by Theorem \ref{fullrank}.
\end{proof}

\begin{remark}\label{rem8}
The converse of Corollary \ref{cor1} is  a theorem of Beauville   \cite{beauville};  restated  in our language, it says that 
if a smooth projective hypersurface $X$  has  the property that all its smooth hyperplane sections are isomorphic to each other,   then $X$ is an extremal  hypersurface.  
\end{remark}

\subsection{Gauss Map} Fix an algebraically closed field. 
Consider a  reduced  closed  subscheme $X\subset \mathbb P^n$ of equi-dimension $d$. 
 The {\bf Gauss map} of $X$ is the rational map 
 $$
 X \dashrightarrow G(d,\,  \mathbb P^n) \,\,\,\,\,\,\,\,\,  \,\,\,\,\,\,\,\,\,  x\mapsto T_xX
 $$
 sending each smooth point $x$ to its embedded projective tangent space $T_xX$, considered as a point in the Grassmannian of $d$-dimensional linear subspaces of $\mathbb P^n$. 
For a hypersurface $X= \mathbb V(f) \subset \mathbb P^n$ defined by a reduced form $f$, the Gauss map can be described explicitly as
 $$
 X \dashrightarrow (\mathbb P^n)^* \,\,\,\,\,\,\,\,\,  \,\,\,\,\,\,\,\,\,  x\mapsto \left[\frac{\partial f}{\partial x_0}: \frac{\partial f}{\partial x_1}: \,\, \cdots \,\,: \frac{\partial f}{\partial x_n} \right].
 $$
This is undetermined along the singular locus of $X$.

It is not hard to see that the Gauss map is finite when $X$ is smooth (without  linear components, which would contract to points under the Gauss map). More generally, the (closure of  the)  image of the Gauss map has  dimension
  $\dim X-\dim {\rm{Sing }}(X) - 1$ \cite[2.8]{Zak}. 

In characteristic zero, the Gauss map of a smooth projective variety is birational, but this can fail in characteristic $p$. Many authors have studied the question of precisely  {\it how} this failure happens, eventually realizing that (at least for hypersurfaces), the issue appears to be  only the {\it inseparability} of the Gauss map; see \cite{wallace},  \cite{Kleiman-Piene}, or \cite{Kaji} for example. 

 A smooth extremal  hypersurface has the property that its Gauss map is highly inseparable---purely inseparable of maximal degree---and its dual hypersurface  is also extremal. The following straightforward statement may be folklore among experts, but we have not found it simply stated in the literature:

\begin{proposition}\label{gauss}
A smooth extremal hypersurface of degree $q+1$ and dimension $d$ has a purely inseparable Gauss map of degree {$q^d$}.  The  dual hypersurface (that is, the image under the Gauss map) is also  a smooth  extremal hypersurface of the same degree.
\end{proposition}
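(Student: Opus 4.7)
The plan is to exploit the observation that for a Frobenius form the Gauss map factors as the absolute Frobenius followed by a linear automorphism of $\mathbb{P}^n$, and then to track the image of $X$ through that factorization. Write $f = \sum_{i,j} a_{ij}\, x_i^q x_j$ with matrix $A = (a_{ij})$; by Proposition~\ref{sing}, smoothness of $X$ is equivalent to $A$ being full rank. Because $q$ is a power of $p$, every derivative of $x_i^q$ vanishes, so one computes directly
$$
\frac{\partial f}{\partial x_k} \;=\; \sum_i a_{ik}\, x_i^q.
$$
Hence the Gauss map is $\Gamma(x) = A^{tr}\, x^{[q]}$, which is the composition $\Gamma = \phi \circ F$, where $F : \mathbb{P}^n \to \mathbb{P}^n$ is the absolute Frobenius and $\phi$ is the linear automorphism of $\mathbb{P}^n$ represented by $A^{tr}$ (invertible precisely because $A$ is full rank).

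For the degree and inseparability of $\Gamma$, note that since $\phi$ is an isomorphism, these coincide with the corresponding invariants of the restriction $F|_X : X \to F(X)$. On function fields, $F^*$ identifies $k(F(X))$ with $k(X)^q$; and it is standard that for an integral variety of dimension $d$ over a perfect field $k$, the extension $k(X)/k(X)^q$ is purely inseparable of exponent one and degree $q^d$. Thus $\Gamma$ is purely inseparable of degree $q^d$, as required.

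To show that $X^*$ is extremal, I would identify its defining equation by computing the image set-theoretically. Because $k$ is algebraically closed of characteristic $p$, the Frobenius $F$ is a bijection on closed points; the unique $q$-th root $y^{[1/q]}$ makes sense, and
$$
F(X) \;=\; \{\, y \in \mathbb{P}^n : y^{[1/q]} \in X \,\} \;=\; V\!\bigl(f(y^{[1/q]})\bigr).
$$
Raising the equation $f(y^{[1/q]}) = 0$ to the $q$-th power preserves its zero locus, and by additivity of Frobenius,
$$
f\bigl(y^{[1/q]}\bigr)^q \;=\; \sum_{i,j} a_{ij}^q\, y_i^q\, y_j,
$$
which is itself a Frobenius form, with matrix $A^{[q]}$. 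Thus $F(X)$ is an extremal hypersurface of degree $q+1$, and $X^* = \phi(F(X))$ is its image under a linear change of coordinates; by the transformation rule of Section~\ref{LinearAlgebra} it too is cut out by a Frobenius form, of the same degree. Smoothness follows because $A^{[q]}$ has the same (full) rank as $A$, so $F(X)$ is smooth by Proposition~\ref{sing}, and linear isomorphisms preserve smoothness.

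The step requiring the most care is this identification of $X^*$: strictly, $X^*$ is the (reduced) scheme-theoretic image of $\Gamma$, and one must verify that it coincides with the hypersurface $V(\sum a_{ij}^q y_i^q y_j)$ traced out above rather than merely agreeing with it set-theoretically. This is immediate once one observes that both are integral---the scheme-theoretic image of $\Gamma$ is integral because $X$ is, and the Frobenius hypersurface cut out by the full-rank matrix $A^{[q]}$ is smooth by Proposition~\ref{sing}, hence reduced and irreducible---so the two must coincide.
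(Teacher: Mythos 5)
Your proposal is correct and follows the same basic structure as the paper's argument: factor the Gauss map as the $q$-th power map composed with the linear automorphism given by (the transpose of) the representing matrix $A$, then analyze each piece. The one place you genuinely diverge is in computing the degree of inseparability. The paper restricts to generic stalks and asserts that $k(X)$ is a purely transcendental extension of $k$ generated by $x_1/x_0,\dots,x_{n-1}/x_0$, identifying the Gauss map with the inclusion of $q$-th powers of those generators; you instead invoke the general fact that for a $d$-dimensional integral variety over a perfect field the subfield $k(X)^q$ has index $q^d$ in $k(X)$, and that the $q$-th power map identifies $k(F(X))$ with $k(X)^q$. Your route is cleaner and more robust, since it does not depend on $k(X)$ being a rational function field. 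You are also a bit more careful than the paper in verifying smoothness of $X^*$ (via the full rank of $A^{[q]}$ and Proposition~\ref{sing}) and in reconciling the set-theoretic description of the image with the scheme-theoretic image; both are minor points but worth having on record. The only quibble is terminological: the map $[x_0:\cdots:x_n]\mapsto[x_0^q:\cdots:x_n^q]$ on $\mathbb{P}^n_k$ is not literally the absolute Frobenius (which would also act on $k$) but the $k$-linear $q$-th power morphism; the argument is unaffected.
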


By {\it purely inseparable}, here, we mean that the induced map on generic stalks is a purely inseparable field extension.
\begin{proof}
Suppose $X=\mathbb V(h) $ is an extremal hypersurface in $\mathbb P^n$. Write  $$h =  x_0^{p^e}L_0 + x_1^{p^e} L_{1}  + \dots + x_n^{p^e}L_n= (\vec{x}^{[p^e]})^{tr} A \,  \vec{x}.$$  The Gauss map is
$$
\begin{aligned}
x \,\, \mapsto  \,\, &\,\,  \left[\frac{\partial h}{\partial x_0}: \frac{\partial h}{\partial x_1}: \,\, \cdots \,\,: \frac{\partial h}{\partial x_n}\right] \\
&  = 
\left [\sum_{i=0}^n  a_{i0}x_i^{p^e} \,\, : \, \, \sum_{i=0}^n  a_{i1}x_i^{p^e}\,\, : \,\, \cdots \,\, : \, \, \sum_{i=0}^n  a_{in}x_i^{p^e}\, \right ] \\
& =  [x_0^{p^e}: x_1^{p^e}: \dots: x_n^{p^e}]  A,
\end{aligned}
$$
where $A$ is the (invertible) matrix representing the Frobenius form $h$.
So  the Gauss map  factors as 
$$
[x_0: x_1: \dots: x_n] \mapsto  [x_0^{p^e}: x_1^{p^e}: \dots: x_n^{p^e}]  \mapsto  [x_0^{p^e}: x_1^{p^e}: \dots: x_n^{p^e}]  A.
 $$
Since $A$ is just a linear change of coordinates, we can analyze the induced map on the  generic stalk for the map $
[x_0: x_1: \dots: x_n] \mapsto  [x_0^{p^e}: x_1^{p^e}: \dots: x_n^{p^e}] $ only. Without loss of generality, 
the generic stalk is 
the fraction field of  $k\left[\frac{x_1}{x_0}, \dots, \frac{x_{n}}{x_0}\right]/\left \langle \frac{h}{x_0^{p^e+1}}\right \rangle$, which is a purely transcendental extension of $k$ of transcendence degree $n-1$ generated by 
 the rational functions  $\frac{x_1}{x_0}, \dots, \frac{x_{n-1}}{x_0}$. So the Gauss map on stalks can be viewed as simply the inclusion $k\left((\frac{x_1}{x_0})^{p^e}, \dots, (\frac{x_{n-1}}{x_0})^{p^e}\right)\subset k(\frac{x_1}{x_0}, \dots, \frac{x_{n-1}}{x_0})$, which is purely inseparable of degree $(p^e)^{n-1}$ where $n-1$ is the dimension of the hypersurface.
 
To see that the image is extremal, note because the matrix $A$ is invertible, it suffices to show the $p^e$-th power map on the homogeneous coordinates has extremal image.
But  the relation $ h =( \vec{x}^{[p^e]})^{tr} A \,  \vec{x}$ on the homogeneous coordinates of $X$ implies the relation  $((\vec{x^{[p^e]}})^{[p^e]})^{tr} A^{[p^e]} \,  \vec{x}^{[p^e]}$ on the coordinates $ [x_0^{p^e}: x_1^{p^e}: \dots: x_n^{p^e}]$ of the image. So the image is isomorphic to  the extremal singularity defined by the Frobenius form represented by  $A^{[p^e]}$.
\end{proof}

\begin{remark}\label{just}
Conjecture 2 in \cite{Kleiman-Piene} can be interpreted as predicting  that any smooth hypersurface of degree $d\geq 3$ with the property that its dual hypersurface is smooth {\it must be} defined by a Frobenius form.
(This is known for curves \cite[6.1, 6.7]{Homma89}, \cite[7.8]{Hefez} and surfaces \cite[14]{Kleiman-Piene}.) Thus, in light of  Proposition \ref{gauss}, we should expect  a smooth hypersurface  is extremal if and only if it dual hypersurface is smooth.
\end{remark}

\begin{remark}
If the extremal hypersurface $X$ is not smooth, the proof of Proposition \ref{gauss} shows that its  Gauss map  is  the $p^e$-th power map followed by a linear projection. 
\end{remark}

\subsection{Lines  on  extremal hypersurfaces}

 Extremal  hypersurfaces are extremal also in the behavior of the linear subspaces they contain; see, for example, the discussion in  \cite[\S35]{kollar14}. One simple way to describe this is by looking at the special configurations of intersecting lines on them.

\begin{definition} \label{star}
A configuration of lines in the projective  plane is {\bf perfect star} of degree $d\geq  3$ if it projectively equivalent to  $d$ reduced concurrent lines with slopes ranging through the $d$-th roots of unity. Equivalently, a perfect star of degree $d$ is defined by an equation $x^d-y^d$, where the characteristic of the ground field does not divide $d$.
\end{definition}

\begin{remark}
We could include $d=1, 2$ in Definition \ref{star}, but then every  configuration of lines $d$ forms a perfect star. When $d=3$, a configuration of lines is a perfect star if and only if the three lines are concurrent. The condition becomes more restrictive as $d$ gets larger.
\end{remark}

Perfect stars are clearly  very special configurations of lines---we don't expect most hypersurfaces to contain {\it any}, unless the hypersurface contains an entire plane. 
So the following result emphasizes that  extremal hypersurfaces really have extremal behavior in terms of the configuration of lines they contain:

\begin{proposition}\label{starconfig}  Let $X\subset \mathbb P^n$ be an extremal  hypersurface of degree $q+1,$ where $q$ is a power of the characteristic $p>0$. Suppose $\ell_1$ and $\ell_2$ are intersecting lines  contained in $X$, and let $\Lambda$ be the plane they span.  If $\Lambda$ is not contained in $X$, then the plane section $\Lambda \cap X$  is either a perfect star of degree $q+1$ or the  union of a $q$-fold line and a reduced line.   \end{proposition}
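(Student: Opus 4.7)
The plan is to restrict $X$ to the plane $\Lambda$, convert the question into one about a Frobenius form in three variables divisible by two of the coordinates, and then invoke the classification of full-rank Frobenius forms.

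First, I would apply Theorem~\ref{sections} iteratively. Realize $\Lambda$ as the intersection of $n-2$ hyperplanes each containing $\Lambda$; since $\Lambda \not\subset X$, none of these hyperplanes is contained in $X$, and each successive hyperplane section stays extremal. Thus $\Lambda \cap X$ is cut out of $\Lambda \cong \mathbb{P}^2$ by a nonzero Frobenius form $\bar h$ of degree $q+1$. Choose homogeneous coordinates $[x:y:z]$ on $\Lambda$ so that $\ell_1 = V(y)$ and $\ell_2 = V(x)$, meeting at $[0:0:1]$; then $xy$ divides $\bar h$.

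Next, I would write $\bar h = x^q L_1 + y^q L_2 + z^q L_3$ with $L_i = a_i x + b_i y + c_i z$ and impose the two divisibilities. Setting $x=0$ kills $\bar h$, yielding $b_2 y^{q+1} + c_2 y^q z + b_3 y z^q + c_3 z^{q+1} = 0$, which forces $b_2 = c_2 = b_3 = c_3 = 0$; symmetrically, setting $y=0$ forces $a_1 = c_1 = a_3 = 0$. Only $b_1$ and $a_2$ can survive, leaving
\[
\bar h \;=\; b_1\, x^q y \;+\; a_2\, x y^q \;=\; xy\bigl(b_1 x^{q-1} + a_2 y^{q-1}\bigr).
\]

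A case split finishes the argument. If $b_1 = 0$ or $a_2 = 0$, then $\bar h$ is a nonzero scalar multiple of $x y^q$ or $x^q y$, so $\Lambda \cap X$ is the union of a $q$-fold line and a reduced line, giving the second alternative. If $b_1$ and $a_2$ are both nonzero, then $\bar h$ is a Frobenius form in the two variables $x,y$ whose representing $2\times 2$ matrix has determinant $-a_2 b_1 \neq 0$, hence full rank (and the form is reduced since $p \nmid q-1$, so the $(q-1)$-th roots of $-a_2/b_1$ are distinct). Theorem~\ref{fullrank} now supplies a linear change of the variables $x,y$ bringing $\bar h$ to a scalar multiple of $x^{q+1} + y^{q+1}$; a further rescaling of $y$ by a $(q+1)$-th root of $-1$ (which exists over the algebraically closed base field, recalling $p \nmid q+1$) produces $x^{q+1} - y^{q+1}$, the defining equation of a perfect star of degree $q+1$.

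The main obstacle is the final identification. At face value the factorization $xy(b_1 x^{q-1} + a_2 y^{q-1})$ exhibits $q+1$ concurrent lines whose set of slopes in $\mathbb{P}^1$ is $\{0,\infty\}\cup \mu_{q-1}$, which has no obvious relationship to the $(q+1)$-th roots of unity cut out by $x^{q+1}-y^{q+1}$. The conceptual leverage is the observation that the residual bivariate form is itself a full-rank Frobenius form, so Theorem~\ref{fullrank} delivers the projective equivalence with the diagonal form automatically, with no need to construct an explicit M\"obius transformation between the two slope configurations on $\mathbb{P}^1$.
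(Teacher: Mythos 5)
Your proof is correct and parallels the paper's own argument up to the expression $\bar h = xy(b_1 x^{q-1} + a_2 y^{q-1})$; the two diverge only in how the generic case $a_2 b_1 \neq 0$ is identified with a perfect star. The paper rescales $x,y$ to normalize the coefficients to $1$ and $-1$ and then simply declares $xy(x^{q-1}-y^{q-1})$ to be a perfect star; that assertion is true but leaves implicit the (not entirely obvious) fact that the slope set $\{0,\infty\}\cup\{\zeta : \zeta^{q-1}=1\} = \mathbb P^1(\mathbb F_q)$ is M\"obius-equivalent to the set of $(q+1)$-th roots of unity --- one witness is $z \mapsto (z-\alpha)/(z-\alpha^q)$ for any $\alpha\in\mathbb F_{q^2}\setminus\mathbb F_q$, or, when $p\neq 2$, simply $x\mapsto x+y$, $y\mapsto x-y$, which sends $x^{q+1}-y^{q+1}$ to $2xy(x^{q-1}+y^{q-1})$. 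You instead observe that the residual form is a full-rank two-variable Frobenius form and invoke Theorem~\ref{fullrank} to diagonalize it, then twist $y$ by a $(q+1)$-th root of $-1$. This sidesteps the slope computation entirely and reduces the final step to a formal consequence of the classification theorem, which is indeed available by this point in the paper; it is arguably the cleaner way to see that the factorization really is a perfect star. Both routes are sound.
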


\begin{proof}
If $\Lambda$ does not lie on $X$, then $\Lambda \cap X \subset \Lambda \cong \mathbb P^2$ is {an} extremal curve by Theorem \ref{sections}. Choose coordinates  $\{x, y, z\}$ for $\mathbb P^2$ so that $\ell_1$ and $\ell_2$ are given by the vanishing of $x$ and $y$ in $\Lambda \cong \mathbb P^2$, and let   $\bar{h}$ be the equation of the plane section 
$\Lambda \cap X$. Since this curve is extremal, 
$$
\bar {h}  = x^qL_1 + y^qL_2 + z^qL_3,
$$
for some linear forms $L_i$.
Because $\ell_1$ and $\ell_2$ lie on this curve, we know both $x$ and $y$ divide $\bar h$. This forces  $y\mid L_1, x\mid L_2, $ and $xy\mid L_3$. In particular, $L_3=0$, since its degree is one. So 
$$ 
\bar {h}  = ax^qy + by^qx = xy(ax^{q-1} + by^{q-1})
$$
for some scalars $a, b$. 
So $\bar h$  factors into $q+1$ linear forms, all distinct unless one of $a$ or $b$ is zero. In the former case, we can scale $x$ and $y$ to assume $a=1$ and $b=-1$ to get a perfect star and in the latter case, we have the non-reduced line configuration defined by $x^qy$.    
\end{proof}

{\small
\bibliographystyle{amsalpha}
\bibliography{bibdatabase}
}

\end{document}